\documentclass[11pt,a4paper]{amsart}

\usepackage[utf8]{inputenc}
\usepackage{txfonts}
\usepackage{hyperref}
\usepackage{graphicx}
\usepackage{amssymb}
\usepackage{amsmath}

\usepackage{tikz}
\usepackage{tikz-cd}
\usetikzlibrary{arrows}
\usepackage{mathrsfs}
\usepackage{mathdots}

\usepackage{marginnote}
\usepackage{todonotes}


\newcommand{\C}{\mathbb{C}}

\newcommand{\Z}{\mathbb{Z}}

\newcommand{\GL}{\mathbf{GL}}
\newcommand{\SL}{\mathbf{SL}}


\newcommand{\Hom}{\mathrm{Hom}}

\newcommand{\Aut}{\mathrm{Aut}}
\newcommand{\Ad}{\mathrm{Ad}}
\newcommand{\End}{\mathrm{End}}
\newcommand{\Lie}{\mathrm{Lie}}
\newcommand{\Maps}{\mathrm{Maps}}
\newcommand{\Fr}{\mathrm{Fr}}
\newcommand{\Isom}{\mathrm{Isom}}



\newcommand{\si}{\sigma}
\newcommand{\eps}{\varepsilon}
\newcommand{\lra}{\longrightarrow}
\newcommand{\lmt}{\longmapsto}
\newcommand{\sG}{\mathscr{G}}
\newcommand{\sH}{\mathscr{H}}
\newcommand{\sO}{\mathscr{O}}
\newcommand{\sP}{\mathscr{P}}
\newcommand{\hP}{\widehat{\mathscr{P}}}
\newcommand{\hG}{\widehat{\mathscr{G}}}

\newcommand{\sQ}{\mathscr{Q}}
\newcommand{\piX}{\pi_1 X}
\newcommand{\Xt}{\widetilde{X}}
\newcommand{\heta}{\widehat{\eta}}
\newcommand{\hzeta}{\widehat{\zeta}}

\renewcommand{\rho}{\varrho}
\renewcommand{\phi}{\varphi}

\renewcommand{\geq}{\geqslant}

\renewcommand{\phi}{\varphi}


\newtheorem{theorem}{Theorem}[section]
\newtheorem{proposition}[theorem]{Proposition}

\newtheorem{corollary}[theorem]{Corollary}

\theoremstyle{definition}
\newtheorem{definition}[theorem]{Definition}
\newtheorem{remark}[theorem]{Remark}
\newtheorem{example}[theorem]{Example}
\newtheorem{exercise}[theorem]{Exercise}

\setcounter{tocdepth}{1}
\numberwithin{equation}{section}
\numberwithin{figure}{section}

\begin{document}

\title{Flat torsors in complex geometry}

\author{Juan Sebastián Numpaque-Roa}
\address{Centro de Matemática da Universidade do Porto, Departamento de Matemática, Faculdade de Ciências da Universidade do Porto, Rua do Campo Alegre S/N, 4169-007 Porto, Portugal}
\email{js.numpaqueroa@gmail.com}

\author{Florent Schaffhauser}
\address{Institut für Mathematik, Universität Heidelberg, Im Neuenheimer Feld 205, 
69120 Heidelberg, Germany}
\email{fschaffhauser@mathi.uni-heidelberg.de}

\subjclass{Primary 32L05; Secondary 55R15}
\keywords{Holomorphic fibre bundles, Classification of fibre spaces}

\begin{abstract}
These notes grew out of a mini-course given by the second-named author at \textit{Casa Matemática Oaxaca} in the Fall of 2022. Their purpose is to provide an exposition, directed at graduate students, of the basic properties of complex analytic group bundles and torsors under them, including the flat case.
\end{abstract}

\maketitle

\tableofcontents
\section{Introduction}

Local systems of vector spaces appear naturally in the theory of ordinary differential equations: if $U \subset \C$ is an open set and $A : U \to \mathrm{Mat}(n\times n, \C)$ is a holomorphic map, the sheaf whose stalk at a point $z \in U$ consists of germs of solutions of a vector-valued linear differential equation $v'(z) = A(z) v(z)$ is a locally constant sheaf of finite dimensional vector spaces. Indeed, the germ at a point $z_0$ of a solution $v : U \to \C^n$ is entirely determined by the initial condition $v(z_0) \in \C^n$. Equivalently, we can view this local system of vector spaces as a vector bundle with \emph{locally constant transition functions}. Or yet equivalently, as a principal $\GL(n,\C)$-bundle with locally constant transition functions. In these notes, we study a generalization of this that enables us to take into account the symmetries of a differential equation. One way to encode such symmetries is to replace principal bundles by so-called \emph{torsors under non-constant groups}, where by non-constant group we mean a group bundle $\sG$ over a space $X$ (Definition \ref{defGrpBdle}). And by torsor we mean a fibre bundle $\sP$ over $X$, endowed with a fibrewise simply transitive action of the group bundle $\sG$ (Definition \ref{torsor_def}). Our goal is to introduce these notions in a certain amount of detail, assuming only basic familiarity with the theory of fibre bundles (\cite{Steenrod, Grothendieck_Kansas}). We shall see that, under certain assumptions, it is possible to classify all group bundles that are trivialized by a given Galois cover of $X$. This is the content of Theorem \ref{ClassifGpBdleTrivByCover}, which is the main result of Section \ref{GpSpace}. It implies that, if the universal cover of $X$ is a contractible Stein manifold $\Xt$ then, given a complex Lie group $G$, we can classify \emph{all} holomorphic group bundles with typical fibre $G$ on $X$ in terms of crossed morphisms $\phi : \piX \to \Maps(\Xt,\Aut(G))$.

\smallskip

Next, we study torsors under non-constant group bundles, which constitute a generalization of principal bundles. Essentially, a torsor is a fibre space $\sP$ over $X$, endowed with a fibrewise simply transitive action of a group bundle (Definition \ref{torsor_def}). This means that every fibre of a torsor over is still a principal homogeneous space under a certain group, but that group may vary depending on the fibre we look at. Remarkably, it is still possible in certain cases to classify torsors under a given group bundle $\sG$, either using the sheaf of local sections of $\sG$ (Theorem \ref{isomorphismH1andIsoClassesOfTorsors}) or, under the same assumptions on the $X$ as above, via crossed morphisms $\psi : \piX \to \Maps(\Xt, G)$. This last part is the content of Corollary \ref{pi_G_ppal_bundles}, which is obtained as a special case of a more general, but more abstract, classification result (Theorem \ref{ClassifTorsorTrivByCover}). Following \cite{Grothendieck_Kansas}, we also introduce in that section the general machinery of frame bundles (Theorem \ref{Aut_E_torsors}).

\smallskip

In the final section, we classify group objects and torsors under them in several categories: topological coverings of a connected topological space $X$ (assuming that $X$ admits a universal covering), analytic coverings of a connected complex analytic manifold, flat fibre bundles over a connected topological space, and flat fibre bundles over a connected complex analytic manifold. In Theorems \ref{ClassifGpCovering} and \ref{pi_Gamma_ppal_coverings}, for instance, we classify group coverings on a topological space $X$ and torsors under them, assuming only that $X$ admits a universal covering. The statements and proofs of these results remain valid in the analytic category: if we assume that $X$ is a complex analytic manifold admitting a universal covering, then we can classify analytic group coverings over $X$ and torsors under them. In Section \ref{twisted_local_systems}, we introduce and classify \emph{twisted local systems} on a complex analytic manifold $X$. These twisted local systems (Definition \ref{def_twisted_local_system}) are the objects that can encode differential equations with hidden symmetry and, by seeing them as torsors under a certain flat group bundle, we classify twisted local systems via crossed morphisms $\rho : \piX \to G$ (Theorem \ref{classif_twisted_local_systems}).

\medskip

\textbf{Acknowledgements.}
The first-named author is supported by FCT-Fundação para a Ciência e a Tecnologia, I.P., under the grant UI/BD/154369/2023 and partially supported by CMUP under the projects with 
reference UIDB/00144/2020 and UIDP/00144/2020 all funded by FCT with national funds. This project was partly supported by the Heidelberg University Excellence Initiative Mobility Grant ExU 11.2.1.45 and the AEI-DFG Joint Research Program V-SHaRP SCHA 2147\_1-1 AOBJ. The authors wish to thank the referee for their helpful comments and suggestions.

\section{Group bundles}\label{GpSpace}

Let $X$ be a topological space. Intuitively, a \textit{group object} in the category of spaces over $X$, or \textit{group space}, is a \textit{family of groups parameterised by} $X$. This means a quadruple $(p, \mu, \iota, \varepsilon)$ where
\begin{itemize}
    \item $p : \sG \to X$ is a space over $X$,
    \item $\mu : \sG\times_X\sG \to \sG$ is a morphism of spaces over $X$,
    \item $\iota : \sG \to \sG$ is a morphism of spaces over $X$, and
    \item $\varepsilon : X \to \sG$ is a section of $p$,
\end{itemize} 
such that for all $x \in X$, the restriction of $\mu$ and $\iota$ to the fibres of $p$ above $x$ defines a group $(\sG_x, \mu_x, \iota_x, \varepsilon(x))$, with $\mu_x$ as multiplication, $\iota_x$ as inversion, and $\varepsilon(x)$ as neutral element of the group $\sG_x$. 

\smallskip

Our endgoal in these notes is to work over a complex analytic \textit{manifold} $X$. So to make the definition above work in that settting, we need to guarantee that the fibre product $\sG \times_X \sG$ exists as a complex analytic manifold. To that end, it suffices to require that $p$ be a \emph{submersion}. In this section, we will restrict our attention to a special class of submersion, namely \textit{locally trivial morphisms} $p : \sG \to X$. This will sometimes simplify our approach, in particular in Section \ref{GpBdleTrivByCover} (as well as play a role in the proof of Theorem \ref{ClassifGpCovering}) while still covering all the examples we need for our purposes.

\subsection{Locally trivial group spaces}

Let $X$ be a complex analytic manifold. In these notes, \emph{a group bundle} on $X$ will be a group space which is locally trivial in the following sense.

\begin{definition}\label{defGrpBdle}
A \emph{group bundle} on a complex analytic manifold $X$ is a complex analytic manifold $\mathscr{G}$ together with a holomorphic map $\sG\overset{p}{\longrightarrow}X$, satisfying the following conditions:
\begin{enumerate}
    \item For all $x\in X$, the fibre $\sG_x := p^{-1}(x)$ is a complex Lie group.
    \item For all $x\in X$, there exists an open neighborhood $U\subseteq X$ containing $x$ and an isomorphism of complex analytic manifolds 
        $$ \Phi_U : p^{-1}(U)\to U\times p^{-1}(x) $$
    such that $\mathrm{pr}_U \circ \Phi_U = p|_{p^{-1}( U )}$ and, for all $y \in U$, the induced bijection 
        $$ \Phi_U|_{p^{-1}(y)}: p^{-1}( y ) \lra p^{-1}( x )$$
    is an isomorphism of complex Lie groups. 
\end{enumerate}
\end{definition}

In addition to $\sG_x := p^{-1}(x)$, we shall often use the notation $\sG|_U := p^{-1}(U)$. One consequence of the local triviality condition is that, over a given connected component of $X$, the fibres of $\sG$ are pairwise isomorphic. Let us now look at a few examples of group bundles.

\begin{example}\label{exampleConstantGroup}
    Let $G$ be a complex Lie group, then the complex analytic manifold $G_X := X \times G$, together with the projection onto the first factor, forms a group bundle over $X$, called the \emph{constant group} over $X$ associated to $G$.
\end{example}

\begin{example}\label{exampleConstantGroup1}
    Let $E$ be a holomorphic vector bundle of rank $n$ over a complex analytic manifold $X$. Then we can view $E$ as a group bundle over $X$, with fibres isomorphic to the additive group $(\C^n, +)$. This group bundle is \textit{non-constant} in general (meaning that $E$ is not a trivial bundle).
\end{example}

To construct further non-constant examples, we can start from a constant group on some space $Y$ equipped with a free action of a finite group $\pi$ and take the quotient of $Y \times G$ by $\pi$, where $G$ is a complex Lie group on which $\pi$ acts by automorphisms (in particular, to the left). Then, if $\pi$ acts non-trivially on $G$, the group bundle $\pi \backslash (Y\times G)$ will be non-constant on $\pi \backslash Y$.

\begin{example}\label{GpBdleFromCov}
Let $q : Y \lra X $ be a covering map of degree $2$ between Riemann surfaces. Let $\pi := \Aut_X(Y) \simeq \Z / 2\Z $ be the automorphism group of that covering, and let $\pi$ act to the left on $ G := \GL(n,\C) $ via the involution $ g \lmt \,^t g^{-1}$. Then the quotient space $$ \sG := \pi \backslash ( Y \times G ) $$ is a group bundle on $ X $.
\end{example}

Other examples of non-constant group bundles on $X$ can be constructed starting from non-trivial bundles on $X$. We recall that, if $G$ is a complex analytic Lie group and $X$ is a complex analytic manifold, a principal $G$-bundle on $X$ is a complex analytic manifold $P$ equipped with a \textit{free} right $G$-action, a $G$-invariant morphism of complex manifolds $p : P \to X$ and local trivialisations $p^{-1}(U) \simeq U \times G$ that are $G$-equivariant with respect to the induced $G$-action on the $G$-invariant open set $p^{-1}(U)$ and the right $G$-action $(x, h) \cdot g := (x, hg)$ on $U \times G$. By definition, the morphisms of principal bundles over $X$ are $G$-equivariant bundle maps over $X$.

\begin{example}\label{Adjoint_of_ppal_bundle}
Let $G$ be a complex Lie group and let $\pi : P \longrightarrow X$ be a principal $G$-bundle. Let $G$ act to the left on $P \times G$ via 
    $$ g \cdot (p,h) := (p \cdot g^{-1}, g h g^{-1}) .$$
Then the fibre bundle $\text{Ad}(P) := G \backslash (P\times G)$
is a group bundle on $ X $.
\end{example}

Note that the global sections of the group bundle $\Ad(P)$ form a group, isomorphic to the group of automorphisms of the principal $G$-bundle $P$, meaning that we have a group isomorphism $\Gamma(\text{Ad}(P))\simeq \Aut(P)$ where $\Aut(P)$ is the so-called \emph{gauge group} of the principal bundle $P$. Indeed, since $P \times G$ is the pullback of $\Ad(P)$ along $\pi : P \to X$, a global section of $\Ad(P)$ can be seen as a map $F: P \lra G$ satisfying the $G$-equivariance property
$$
\forall p \in P,\ \forall g \in G,\ F(p\cdot g)=g^{-1}F(p) g.
$$
And this defines an automorphism of principal $G$-bundles $\Phi_F : P \lra P$ via the formula 
$$
\Phi_F(p) = p\cdot F(p)
$$
because the $G$-equivariance of $F$ implies that
$$
\Phi_F(p \cdot g) = (p \cdot g) \cdot F(p \cdot g) = (p \cdot g) \cdot (g^{-1} F(p) g) = \big(p \cdot F(p)\big) \cdot g = 
\Phi_F(p) \cdot g.
$$
\begin{exercise}
Show that, conversely, an automorphism $\Phi \in \Aut(P)$ defines a global section of $\Ad(P)$ and that this indeed defines a group isomorphism $\Gamma(\text{Ad}(P))\simeq \Aut(P)$.
\end{exercise}

\begin{example}\label{groupBundleExampleAutomorphismFibreBundle}
More generally, let us consider the \emph{automorphism bundle} of a fibre bundle $E$ on $X$. Let $E$ be a fibre bundle on $X$ (e.g.\ a vector bundle) and consider the set 
$$
\mathcal{A}ut(E) := \bigsqcup_{x\in X} \Aut(E_x)
$$
where $\Aut(E_x)$ is the automorphism group of the fibre $E_x$.
Then $\mathcal{A}ut(E)$ is a group bundle on $X$. The group of global sections of $\mathcal{A}ut(E)$ is (isomorphic to) the automorphism group of $E$, meaning that $\Gamma(\mathcal{A}ut(E)) \simeq \Aut(E)$.
\end{example}

Let us give an example where $E$ is a vector bundle on $X$. Recall first that, if $E$ is a vector bundle of rank $n$ represented by a cocycle of transition functions 
$$
g_{ij} : U_i \cap U_j \lra \GL(n,\C),
$$
then local sections of $E$ over some open set $V \subseteq X$, can be represented by holomorphic functions
$$
s_i : V \cap U_i \lra \C^n \ \text{such that} \ s_i = g_{ij} s_j
$$
over $V \cap (U_i \cap U_j)$. So local sections of the group bundle $\mathcal{A}ut(E)$ over $V$ can be represented by holomorphic maps 
$$
u_i : V \cap U_i \lra \GL(n,\C) \ \text{such that} \ 
u_i = g_{ij} u_j g_{ij}^{-1}
$$
over $V \cap (U_i \cap U_j)$. Let now $L$ be a complex line bundle on $X$ and consider the vector bundle
\begin{equation}\label{bdle_with_fixed_det}
    E := \C_X^{n-1} \oplus L
\end{equation}
which is the direct sum of $L$ and the trivial line bundle of rank $n-1$ for some integer $n \geq 2$. Then the local sections of the group bundle $\mathcal{A}ut(E)$ can be described as follows. We fix an open cover 
$(U_i)_{i\in I}$ of $X$ that trivializes the line bundle $L$ and a cocycle of transition functions
$
h_{ij}:U_i \cap U_j \to\C^*
$
representing the line bundle $L$. Then $E|_{U_i}$ is trivial and we can represent $E$ by the cocycle of transition functions
$g_{ij}: U_i \cap U_j \to \text{GL}(n,\mathbb{C})$ defined by 
\begin{equation}\label{cocycle_bdle_with_fixed_det}
    g_{ij}
    =
    \begin{pmatrix}
        1 & 0 & 0& \ldots & 0 \\
        0 & 1 & 0 & \ldots & 0 \\
        \vdots &  & \ddots &  &\vdots \\
        0&\ldots&&1&0  \\
        0 &0&\ldots&0&h_{ij} 
    \end{pmatrix}
    .
\end{equation}
As a consequence, a local section $u$ of the group bundle $\mathcal{A}ut(\C_X^{n-1}\oplus L)$, over some open set $V \subseteq X$, corresponds to a collection of holomorphic maps 
$$
u_i : V \cap U_i \to \GL(n,\C)\
\text{such that}\
u_i = g_{ij} u_j g_{ij}^{-1}
$$
over $V \cap (U_i \cap U_j)$.

\begin{remark}
Definition \ref{defGrpBdle} admits variants with small modifications. For instance, given a morphism of complex analytic manifolds $p : \sG \lra X$ whose fibres are complex analytic Lie groups, the local triviality condition can also be formulated as follows: for all $x \in X$, there exists an open neighbourhood $U\subseteq X$ of $x$, a complex analytic Lie group $G_U$ and an isomorphism of complex analytic fibre bundles $p^{-1}(U) \simeq U \times G_U$ such that all the induced bijections $p^{-1}(y) \simeq G_U$ are group morphisms. If one does not assume that the fibres of $p$ are Lie groups, then one can instead require that a change of local trivialisation $(U \cap V) \times G_U \simeq (U \cap V) \times G_V$ (over $U \cap V$) be a group morphism fibrewise. Then the fibres of $p$ acquire a well-defined structure of complex analytic Lie group.
\end{remark}

\subsection{Morphisms, subgroups and Lie algebras}

Let us now define morphisms of group bundles and introduce the notion of subgroup of a group bundle.

\begin{definition}
    Let $\sG$, $\sG'$ be group bundles over $X$. A \emph{morphism of group bundles} between $\sG$ and $\sG'$ is a complex analytic map $F:\sG \lra \sG'$ such that:
    \begin{enumerate}
        \item The diagram 
        \begin{center}
            \tikzcdset{arrow style=tikz, diagrams={>=angle 90}}
            \begin{tikzcd}
                \sG \arrow[to=1-3,"F"] \arrow[to=2-2,"p'",']& & \sG' \arrow[to=2-2,"p"]\\
                &X&  
            \end{tikzcd}
        \end{center}
        commutes.
        \item For all $x\in X$, the restriction $F_x:=F|_{\sG_x}:\sG_x\longrightarrow\sG'_x$ is a homomorphism of complex Lie groups. 
    \end{enumerate}
\end{definition}

\begin{example}\label{det_morphism}
    Let $E$ be a vector bundle on $X$ and let $L := \det(E)$ be its determinant line bundle. The automorphism bundle of a line bundle is the constant group $\C^*_X$ and the determinant map $\det_x : \Aut(E_x) \lra \C^*$ induces a morphism of group bundles $\det : \mathcal{A}ut(E) \lra \C^*_X$.
\end{example}
So a morphism of group bundles is basically a family of group morphisms which, taken together, define a morphism of complex analytic manifolds. Similarly, a subgroup $\sH$ of a group bundle $\sG$ is basically a family of subgroups that form a sub-bundle of $\sG$ (the latter condition being a requirement on the \textit{compatibility} of the local trivialisations of $\sG$ and $\sH$).

\begin{definition}\label{subgroup}
	Let $\sG$ be a group bundle on $X$. A subset $\sH \subseteq \sG$ is called a \emph{subgroup} of $\sG$ if it satisfies the following two conditions:
    \begin{enumerate}
        \item For all $x \in X$, the subset $\sH_x := \sG_x \cap \sH$ is a Lie subgroup of $\sG_x$.
        \item For all $x \in X$, there exists an open neighborhood $U \subseteq X$ containing $x$ and a local trivialisation 
        $$ \Phi:\sG|_U \to U \times \sG_x $$
        of $\sG$ that induces a bijection
        $$\sH \cap \sG|_U \simeq U \times \sH_x .$$  
    \end{enumerate}
    A trivialising open set $U$ for $\sG$ that satisfies Condition (2) above will be called a trivialising open set for the subgroup $\sH$.
\end{definition}

As a consequence of Definition \ref{subgroup}, we see that $\sH$ is a submanifold of $\sG$ and that $p|_{\sH}: \sH \lra X$ is a group bundle. Note that, as usual for subbundles, we are not saying that $\sH_U$ is trivial for all trivialising open set of $\sG$, only that there exists a local trivialisation of $\sG$ that is adapted to $\sH$ in the sense of Condition (2) in Definition \ref{subgroup}.

\begin{example}
	Building on Example \ref{exampleConstantGroup}, let $H$ a complex analytic subgroup of $G$. Then the constant group $H_X := X\times H\lra X$ is a subgroup of the group bundle $G_X$. Similarly, in Example \ref{exampleConstantGroup1}, take $F$ to be a sub-bundle of the vector bundle $E$. Then we can view $F$ as a subgroup of the additive group bundle associated to $E$.
\end{example}

To construct more examples, we observe that the kernel of a bundle morphism $\Phi : \sG \lra \sG'$ that is of constant rank as a morphism of complex analytic manifolds, defines a subgroup $\sH := \ker \Phi$.

\begin{example}\label{exampleSubgroupSL}
    Let $E$ be a vector bundle on $X$. Its automorphism bundle $\mathcal{A}ut(E)$ can also be denoted by $\GL(E)$. Then, thanks to the construction of the determinant morphism $\det : \GL(E) \to \C_X^*$ in Example \ref{det_morphism}, we have a well-defined short exact sequence of group bundles
    \begin{equation}\label{det_one_automorphisms}
    1 \lra \SL(E) \lra \GL(E) \overset{\det}{\lra}\C^*_X \lra 1
    \end{equation}
    where $\SL(E) := \ker (\det)$ is the group bundle whose fibres can be described as follows:
    $$
    \SL(E)_x = \{ A \in \GL(E_x)\ |\ \det(A) = 1 \} = \SL(E_x).
    $$
\end{example}

\begin{exercise}
    Define normal subgroups of a group bundle $\sG$. Given a subgroup $\sH \subseteq \sG$, show that there is a fibre bundle $\sG / \sH$ and that, if $\sH$ is a normal subgroup, this fibre bundle is a group bundle.
\end{exercise}

Our final task in this subsection is to introduce the Lie algebra of a group bundle $\sG$ over $X$. To that end, note first that as a consequence of Definition \ref{defGrpBdle}, we can reconstruct a group bundle $p : \sG \lra X$ from its local trivialisations, by gluing two different local trivialisations $p^{-1}(U) \simeq U \times \sG_x$ and $p^{-1}(V) \simeq V \times \sG_x$ at $x$ via the identification
\[
\Phi_U \circ \Phi_V^{-1} |_{p^{-1}(U \cap V)} : 
    (U \cap V) \times \sG_x \overset{\simeq}{\lra} (U \cap V) \times \sG_x 
\]
which can be written as
$$ \Phi_U \circ \Phi_V^{-1}(y, g) = \big(y, h_{UV}(y) \cdot g\big)$$
where $h_{UV} : U \cap V \to \Aut(\sG_x)$ is a morphism of complex analytic manifolds. In particular, for all $y\in U\cap V$, the isomorphism of complex analytic Lie groups $h_{UV}(y) : \sG_x \to \sG_x$ induces an isomorphism of complex Lie algebras $\Lie(\sG_x) \to \Lie(\sG_x)$, which we will also denote by $h_{UV}(y)$. Then the Lie algebra bundle $\Lie(\sG)$ of the group bundle $\sG$ is by definition the bundle of Lie algebras obtained by gluing $U \times \Lie(\sG_x)$ and $V \times \Lie(\sG_x)$ via the induced identification 
\[
    \begin{array}{rcl}
        (U \cap V) \times \Lie(\sG_x) & \overset{\simeq}{\lra} & (U \cap V) \times \Lie(\sG_x ) \\
        (y, v) & \lmt & (y, h_{UV}(y)\cdot v)
    \end{array}\]
and sometimes we will simply call it the \textit{Lie algebra} of the group bundle $\sG$.

\begin{example}
    Let $E$ be a vector bundle over $X$. The Lie algebra of the group bundle $\mathbf{SL}(E)$ constructed in \eqref{det_one_automorphisms} is the Lie algebra subbundle $\mathfrak{sl}(E) \subset \End(E)$ whose fibres can be described as follows
    $$
    \mathfrak{sl}(E)_x := \{ A \in \End(E_x) \ | \ \mathrm{tr}(A) = 0 \} = \mathfrak{sl}(E_x).
    $$
\end{example}

\begin{definition}
    Let $p : \sG \to X$ be a group bundle. The Lie algebra bundle of $\sG$ will be denoted by $\Lie(\sG)$.
\end{definition}

\subsection{Group bundles trivialised by a cover}\label{GpBdleTrivByCover}

Generalising the situation of Example \ref{GpBdleFromCov}, we fix a Galois cover $q : Y \to X$, with automorphism group $\pi := \Aut_X(Y)$. Recall that a Galois cover means a connected cover $q : Y \to X$ such that the induced covering $\overline{q} : \pi\backslash Y \to X$ is an isomorphism, which is equivalent to $\pi$ acting transitively on the fibres of $q$. Group bundles $\sG \to X$ whose pullback to $Y$ is trivial are called $\pi$-bundles. Such bundles are necessarily locally trivial on $X$. If we also fix the typical fibre $G$, we speak of $(\pi, G)$-group bundles (expanding on Seshadri's terminology in \cite{Seshadri_parab_remarks}). We recall that the pullback bundle of a fibre bundle $E \to X$ by a map $q : Y \to X$ is by definition the fibre product $$q^* E := Y \times_X E\,.$$ As such, $q^*E$ is itself a fibre bundle over $Y$, satisfying the universal property of a fibre product, which is usually depicted as follows.

\begin{equation*}\label{univ_ppty_pullback}
    \begin{tikzcd}
        Z \arrow[to=2-3, bend left, "u"] \arrow[to=2-2, dashed, "\exists ! \psi"] \arrow[to=3-2, bend right, "v"] & & \\
        & q^*E \arrow[to=2-3, "\hat{q}"] \arrow[to=3-2, "\hat{p}"] & E \arrow[to=3-3,"p"] \\
        & Y \arrow[to=3-3, "q"] & X
    \end{tikzcd}    
\end{equation*}

\begin{definition}\label{pi_G_gp_bundle}
    Given a complex analytic manifold $X$, a Galois cover $q : Y \to X$, with automorphism group $\pi := \Aut_X(Y)$, and a complex Lie group $G$, a group bundle $\sG \to X$ such that $q^* \sG \simeq Y \times G$ as group bundle will be called a $(\pi, G)$-group bundle.
\end{definition}

The interest of this definition is that we can classify all $(\pi, G)$-group bundles. Indeed, the pullback bundle $q^* \sG$ is not just a group bundle with fibre $G$: it has a canonical $\pi$-equivariant structure,  meaning that the action of $\pi$ on $Y$ lifts to an action $q^*\sG$, respecting the group structure on the fibres. More precisely, we will show the existence of a collection of maps $(\tau_f)_{f\in\pi}$ satisfying the following conditions:

\begin{enumerate}
    \item $\tau_{\mathrm{id}_{Y}} = \mathrm{id}_{q^*\sG}$ and the following diagram is commutative:
    \begin{equation}\label{action_lift}
        \begin{tikzcd}
            q^*\sG \arrow[to=1-2,"\tau_f"] \arrow[to=2-1] & q^*\sG \arrow[to=2-2]\\
            Y \arrow[to=2-2,"f"] & Y
        \end{tikzcd}
    \end{equation} 
    \item For all $f_1, f_2 \in \pi$, 
    \begin{equation}\label{lift_compatible_with_product}
        \tau_{f_1 \circ f_2} = \tau_{f_1} \circ \tau_{f_2}.
    \end{equation}
    \item For all $f\in \pi$, all $y \in Y$ and all $g_1, g_2 \in (q^*\sG)_y$, 
    \begin{equation}\label{gp_morphism_ppty_equiv_struc}
    \tau_f(g_1 g_2) = \tau_f(g_1) \tau_f (g_2).
    \end{equation}
\end{enumerate}

\noindent So if $q^*\sG$ is isomorphic to $Y \times G$ as a group bundle, we inherit a $\pi$-equivariant structure on $Y \times G$ and these are the structures that we will end up classifying (Theorem \ref{ClassifGpBdleTrivByCover}).

\smallskip

The reason why the maps $(\tau_f)_{f \in \pi}$ exist and satisfy the required properties \eqref{action_lift}, \eqref{lift_compatible_with_product} and \eqref{gp_morphism_ppty_equiv_struc} is a direct consequence of the universal property of a pullback bundle and the fact that if $f\in\pi$ is an automorphism of the cover $q : Y \to X $, then by definition $q\circ f = q$. Given such an $f$, the map $\tau_f$ is then defined by means of the following commutative diagram:
\begin{equation}\label{def_action_lift}
    \begin{tikzcd}
        q^* \sG \arrow[to=2-3, bend left, "\hat{q}"] \arrow[to=2-2, dashed, "\exists ! \tau_f"] \arrow[to=3-1,"\hat{p}"] & & \\
        & q^*\sG \arrow[to=2-3, "\hat{q}"] \arrow[to=3-2, "\hat{p}"] & \sG \arrow[to=3-3,"p"] \\
        Y \arrow[to=3-2, "f"] & Y \arrow[to=3-3, "q"] & X
    \end{tikzcd}    
\end{equation}

\noindent We thus have a left action of $\pi$ on 
$$
q^*\sG = \big\{ (\xi,g) \in Y \times \sG \ | \ q(\xi) = p(g) \big\}
$$ 
defined, for all $(\xi,g) \in q^*\sG$ and all $f\in \pi$ by 
$$
f \cdot (\xi,g) = \tau_f(\xi, g).
$$
By construction, one has $\pi \backslash q^*\sG \simeq \sG$ over $X \simeq \pi \backslash Y$, where $\pi$ acts on $q^* \sG$ via the family $(\tau_f)_{f \in \pi}$.

\smallskip

In what follows, we denote by $\Aut(G)$ the group of complex analytic, group automorphisms of the complex Lie group $G$. The set $\Maps(Y,\Aut(G))$, consisting of holomorphic maps $\lambda : Y \to \Aut(G)$, is a group under pointwise composition of such maps: 
\begin{equation}\label{product_on_mapping_space_to_Aut(G)}
    (\lambda_1 \cdot \lambda_2)(y) := \lambda_1(y) \circ \lambda_2(y).
\end{equation}
Moreover, the group $\pi = \Aut_X(Y)$ acts to the right by group automorphisms on the group 
$\Maps(Y,\Aut(G))$, via 
\begin{equation}\label{action_of_pi_on_Aut_G}
    \lambda^{f} := \lambda \circ f.
\end{equation} 
So there is a notion of \emph{crossed morphism} 
$\phi : \pi \to \Maps(Y, \Aut(G))$, which we now introduce formally.
\begin{definition}
    A crossed morphism $\phi : \pi \to \Maps(Y, \Aut(G))$ with respect to the $\pi$-action defined in Equation \ref{action_of_pi_on_Aut_G} is a map satisfying 
    \begin{equation}\label{crossed_morphism_eq}
        \phi_{f_1 \circ f_2} = \phi_{f_1}^{f_2} \cdot \phi_{f_2}
    \end{equation} 
    where we denote by $\phi_f : Y \to \Aut(G)$ the image of $f \in \pi = \Aut_X(Y)$ under $\phi$.
\end{definition} 
We will denote by $Z^1(\pi, \Maps(Y, \Aut(G)))$ the set of crossed morphisms $\phi : \pi \to \Maps(Y,\Aut(G))$. Then, the element $\lambda \in \Maps(Y, \Aut(G))$ acts on $\phi \in Z^1(\pi, \Maps(Y, \Aut(G)))$ via the formula (with $f \in \pi$ and $y \in Y$)
\begin{equation}\label{action_on_crossed_morphisms}
    (\lambda \bullet \phi)_f (y) := \lambda \big( f(y) \big) \circ \phi_f\big(y\big) \circ \lambda \big( y \big)^{-1}\ .
\end{equation}

\begin{exercise}
    Check that the map $\lambda \bullet \phi : \pi \to \Maps(Y,\Aut(G))$ defined in \eqref{action_on_crossed_morphisms} is indeed a crossed morphism. Show  moreover that $\mathbf{1} \bullet \phi = \phi$ (where $\mathbf{1} : Y \to G$ is the constant function equal to $\mathrm{Id}_G$ on $Y$) and that $(\lambda_1 \cdot \lambda_2) \bullet \phi = \lambda_1 \bullet (\lambda_2 \bullet \phi)$, so that we indeed have a left action of the group $\Maps(Y, \Aut(G))$ on $Z^1(\pi, \Maps(Y, \Aut(G)))$.
\end{exercise}

We then have the following classification result for $(\pi, G)$-group bundles.

\begin{theorem}\label{ClassifGpBdleTrivByCover}
    Assume that $X$ is connected. Let $q : Y \to X$ be a Galois cover, with automorphism group $\pi := \Aut_X(Y)$, and let $G$ be a complex Lie group.
    \begin{enumerate}
        \item If $p : \sG \to X$ is a $(\pi, G)$-group bundle, then there exists a crossed morphism (in the sense of Equation \eqref{crossed_morphism_eq})
        \begin{equation}\label{crossed_morphism_from_pi_to_maps_to_G}
            \varphi : \pi \lra \Maps\big(Y, \Aut(G)\big)
        \end{equation}
        and an isomorphism of group bundles 
        $$
        \sG \simeq \pi \backslash \big(Y \times G\big)
        $$ 
        where $f \in \pi$ acts to the left on $(y, g) \in Y \times G$ via
        \begin{equation}\label{twisted_action_of_pi_on_Y_times_G}
            \tau_f (y,g) := \big(f(y), \varphi_f(y) (g)\big).
        \end{equation}
        \item Moreover, if $\phi$ and $\phi'$ are crossed morphisms from $\pi$ to $\Maps(Y, \Aut(G))$, then the associated group bundles $\sG$ and $\sG'$ are isomorphic over $X$ if and only if $\phi$ and $\phi'$ are conjugate under the left action of $\Maps(Y, \Aut(G))$ defined in \eqref{action_on_crossed_morphisms}.
    \end{enumerate} 
    So, the set of isomorphism classes of $(\pi,G)$-group bundles is in bijection with the group cohomology set 
    $$
    H^1\big(\pi, \Maps\big(Y, \Aut(G)\big) \big) := 
    \Maps\big(Y, \Aut(G) \big) \ \big\backslash \ Z^1\big(\pi, \Maps\big(Y, \Aut(G)\big) \big)\ .
    $$
\end{theorem}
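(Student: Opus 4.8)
The plan is to prove both assertions by transporting the canonical $\pi$-equivariant structure of $q^*\sG$, recalled in diagram \eqref{def_action_lift}, across a group-bundle trivialisation over $Y$. For part (1), I would start from an isomorphism $\Psi : q^*\sG \overset{\sim}{\lra} Y \times G$ of group bundles over $Y$ (which exists because $\sG$ is a $(\pi,G)$-group bundle) and use it to push the maps $\tau_f$ forward to automorphisms $\widetilde{\tau}_f := \Psi \circ \tau_f \circ \Psi^{-1}$ of $Y \times G$. Since each $\widetilde\tau_f$ covers $f$ on the base and restricts to a group morphism on each fibre, it must have the shape $\widetilde\tau_f(y,g) = \big(f(y), \phi_f(y)(g)\big)$ for a unique $\phi_f(y) \in \Aut(G)$, and holomorphy of $\widetilde\tau_f$ gives that $y \lmt \phi_f(y)$ lies in $\Maps(Y,\Aut(G))$. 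Feeding the cocycle identity \eqref{lift_compatible_with_product} through this description and comparing fibre components yields $\phi_{f_1 \circ f_2}(y) = \phi_{f_1}(f_2(y)) \circ \phi_{f_2}(y)$, which is exactly the crossed-morphism relation \eqref{crossed_morphism_eq} once one recalls the $\pi$-action \eqref{action_of_pi_on_Aut_G} and pointwise composition. Since $\Psi$ is $\pi$-equivariant for $(\tau_f)$ on the source and $(\widetilde\tau_f)$ on the target by its very definition, it descends to the quotients; combined with the identification $\pi \backslash q^*\sG \simeq \sG$ recalled above, this produces the desired isomorphism $\sG \simeq \pi\backslash(Y \times G)$ with action \eqref{twisted_action_of_pi_on_Y_times_G}.

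For the "if" direction of part (2), given $\phi' = \lambda \bullet \phi$ I would define $\Lambda : Y \times G \lra Y \times G$ by $\Lambda(y,g) = (y, \lambda(y)(g))$, a group-bundle isomorphism over $Y$ covering $\mathrm{id}_Y$. A direct substitution of the explicit formula \eqref{action_on_crossed_morphisms} into $\Lambda \circ \tau_f$ versus $\tau_f' \circ \Lambda$ shows that the factors $\lambda(y)^{-1}$ and $\lambda(y)$ cancel, so $\Lambda$ intertwines the $\phi$- and $\phi'$-twisted actions; it therefore descends to an isomorphism $\pi\backslash(Y\times G) \simeq \pi\backslash(Y\times G)$, that is $\sG \simeq \sG'$ over $X$.

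The converse is where I expect the main subtlety to lie. Given an isomorphism $\Theta : \sG \lra \sG'$ of group bundles over $X$, I would pull it back to $q^*\Theta : q^*\sG \lra q^*\sG'$, i.e.\ to a map $\widetilde\Theta : Y\times G \lra Y \times G$ covering $\mathrm{id}_Y$. The key point is that $q^*\Theta$ automatically commutes with the canonical lifts, $q^*\Theta \circ \tau_f = \tau_f' \circ q^*\Theta$; this is not a computation but a consequence of the universal property in \eqref{def_action_lift}, proved by checking that both composites agree after composing with the two structure maps $\widehat{p}$ and $\widehat{q}$ of the pullback and then invoking uniqueness. Granting this, $\widetilde\Theta$ is fibrewise a group automorphism of $G$, hence of the form $(y,g) \lmt (y, \lambda(y)(g))$ with $\lambda \in \Maps(Y, \Aut(G))$, and the equivariance $\widetilde\Theta \circ \tau_f = \tau_f' \circ \widetilde\Theta$ rearranges precisely into $\phi_f'(y) = \lambda(f(y)) \circ \phi_f(y) \circ \lambda(y)^{-1}$, that is $\phi' = \lambda \bullet \phi$.

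Putting the two parts together, part (1) gives a surjection from $Z^1(\pi, \Maps(Y,\Aut(G)))$ onto the set of isomorphism classes of $(\pi,G)$-group bundles, and part (2) identifies its fibres with the orbits of the action \eqref{action_on_crossed_morphisms}; this quotient is by definition $H^1(\pi, \Maps(Y, \Aut(G)))$, which yields the stated bijection. Beyond the universal-property argument flagged above, the only other place demanding care is the passage between a holomorphic family $y \lmt \phi_f(y)$ of automorphisms of $G$ and the holomorphic map $(y,g)\lmt \phi_f(y)(g)$, i.e.\ ensuring that $\phi_f$ genuinely defines an element of $\Maps(Y,\Aut(G))$; this is the adjunction underlying the very definition of that mapping group.
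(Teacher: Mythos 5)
Your proposal is correct and follows essentially the same route as the paper: transport the canonical $\pi$-equivariant structure of $q^*\sG$ to $Y \times G$, read off $\phi_f(y) \in \Aut(G)$ from the fibrewise group-morphism property, extract the crossed-morphism relation from \eqref{lift_compatible_with_product}, and identify isomorphisms of $(\pi,G)$-bundles with elements $\lambda \in \Maps(Y,\Aut(G)) \simeq \Aut(G_Y)$ conjugating the two equivariant structures. You in fact supply two details the paper leaves implicit --- the uniqueness argument via the universal property of the pullback showing that $q^*\Theta$ automatically intertwines the canonical lifts, and the explicit ``if'' direction of part (2) --- both of which check out.
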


\begin{proof}
    Let $\sG$ be a $(\pi, G)$-group bundle on $X = \pi \backslash Y$, in the sense of Definition \ref{pi_G_gp_bundle}. Then the $\pi$-equivariant group bundle $q^*\sG$ is isomorphic to $Y \times G$ as a group bundle on $Y$. By construction, there is an isomorphism of group bundles $\sG \simeq \pi \backslash q^*\sG$, where $\pi$ acts on $q^*\sG$ via the $\pi$-equivariant structure \eqref{def_action_lift}. So, to prove the first claim of the theorem, we only need to classify all $\pi$-equivariant structures on the group bundle $Y \times G$. Namely, it suffices to show the existence of a crossed morphism $\phi$ as in 
    \eqref{crossed_morphism_from_pi_to_maps_to_G} such that the $\pi$-equivariant structure of $Y \times G$ induced by the isomorphism with $q^*\sG$ is given by Formula 
    \eqref{twisted_action_of_pi_on_Y_times_G}. So let $(\tau_f)_{f \in \pi}$ be a $\pi$-equivariant structure on $Y \times G$. By commutativity of Diagram \eqref{action_lift}, this structure $(\tau_f)_{f\in\pi}$ must be of the form
    \begin{equation}\label{equiv_structure_on_product_bundle}
    \tau_f : Y \times G \lra Y \times G, \quad
    (y, g) \lmt \big( f(y), \Phi_f(y,g) \big)
    \end{equation}
    where, in view of Property \eqref{gp_morphism_ppty_equiv_struc}, the map $\Phi_f : Y \times G \to G$ satisfies
    \begin{equation}
        \Phi_f(y, g_1 g_2) = \Phi_f(y,g_1)\Phi_f(y,g_2).
    \end{equation}
    Finally, due to Property \eqref{lift_compatible_with_product}, we have, for all $f_1, f_2 \in \pi$, 
    \begin{equation}\label{lift_compatible_with_product_on_product_bundle}
        \Phi_{f_1 \circ f_2} (y, g) = \Phi_{f_1}\big( f_2(y), \Phi_{f_2} (y, g) \big).
    \end{equation}
    Thus, for all $f \in \pi$, there is an induced map
    \begin{equation}\label{towards_crossed_morphism}
        \phi_f: Y \to \Aut(G),\quad y \lmt \phi_f(y) := \Phi_f(y,\,\cdot\,)
    \end{equation}
    and this defines a map
    $$
    \phi : \pi \lra \Maps\big( Y, \Aut(G) \big)
    $$
    which, in view of \eqref{lift_compatible_with_product_on_product_bundle}, satisfies 
    $$
    \phi_{f_1 \circ f_2} = \phi_{f_1}^{f_2} \cdot \phi_{f_2}
    $$
    as defined in \eqref{crossed_morphism_eq}. This means that $\phi : \pi \to \Maps(Y,\Aut(G))$ is a crossed morphism with respect to the $\pi$-action on $\Maps(Y,\Aut(G))$ defined in \eqref{action_of_pi_on_Aut_G}. Indeed, using the definition of $\phi_f(y)\in \Aut(G)$ given in \eqref{towards_crossed_morphism} as well as the group structure \eqref{product_on_mapping_space_to_Aut(G)} on $\Maps(Y,\Aut(G))$, we have, for all $y \in Y$ and all $g\in G$,
    $$
    \phi_{f_1\circ f_2}(y)(g) = \phi_{f_1}^{f_2 }\big(y\big) \big( \phi_{f_2}(y) (g) \big) = \big( \phi_{f_1}^{f_2}(y) \circ \phi_{f_2}(y) \big) (g) = (\phi_{f_1}^{f_2} \cdot \phi_{f_2})(y) (g).
    $$
    This concludes the proof of the first part of Theorem \ref{ClassifGpBdleTrivByCover}. 
    
    \smallskip

    To prove the second part, observe that the automorphism group of the group bundle $G_Y := Y \times G$ satisfies $\Aut(G_Y) \simeq \Maps(Y, \Aut(G))$. So if the $(\pi,G)$-group bundles $\sG$ and $\sG'$ (defined by the crossed morphisms $\phi$ and $\phi'$) are isomorphic as group bundles, there is an induced isomorphism of $\pi$-equivariant group bundles which we denote by $\alpha$, going from $q^* \sG \simeq G_Y$ to $q^* \sG' \simeq G_Y$, hence a map $\lambda \in \Maps(Y, \Aut(G))$ that commutes to the $\pi$-equivariant structures $\tau$ and $\tau'$ induced respectively by $\phi$ and $\phi'$. Concretely, this means that the automorphism $\alpha : G_Y \to G_Y$ defined by $(y, g) \mapsto (y, \lambda(y)(g))$ satisfies, for all $f \in \pi$, the relation 
    $$
    \alpha \circ \tau_f \circ \alpha^{-1} = \tau'_f
    $$
    hence, for all $y \in Y$ and all $g \in G$,
    $$
    \big( f(y), \big(\lambda(f(y)) \circ \phi_f(y) \circ \lambda(y)^{-1}\big) (g)\big) = \big(f(y), \phi'_f(y)(g)\big),
    $$
    which proves that $\phi' = \lambda \bullet \phi$, where $\lambda \bullet \phi$ is defined as in \eqref{action_on_crossed_morphisms}.
\end{proof}

\begin{remark}\label{grpBundlesWithDiscreteFibres}
    If $G$ is replaced by a \emph{discrete} group $\Gamma$, then the map $\Phi_f : Y \times \Gamma \to \Gamma$ defined by \eqref{equiv_structure_on_product_bundle} is in fact constant in $Y$, so the induced map $\phi_f$, defined in \eqref{towards_crossed_morphism}, is just an automorphism of $G$. As a consequence, in this case we have a group morphism $\phi : \pi \to \Aut(\Gamma)$.
\end{remark}

Theorem \ref{ClassifGpBdleTrivByCover} applies in particular to group bundles $\sG$ over $X$ that become trivial after pullback to the universal cover of $X$ (in which case $\pi \simeq \piX$). There are two noteworthy situations in which this occurs:
\begin{enumerate}
    \item When the total space of the universal cover $q : \Xt \to X$ is a contractible Stein manifold \cite{Grauert_hol_bundles}. 
    \item When the group bundle $p : \sG \to X $ has discrete fibres.
\end{enumerate}

The second case will be analysed in Theorem \ref{ClassifGpCovering}. As for the first case, since a group bundle $\sG$ is assumed, by definition, to be locally trivial over $X$, the holomorphic fibre bundle $q^* \sG$ is a locally trivial fibre bundle over $\Xt$, and since $\Xt$ is contractible, this implies that $q^* \sG$ is topologically trivial. Then, as $\Xt$ is a Stein manifold, a topologically trivial holomorphic bundle is in fact holomorphically trivial. So if we fix a base point in $\Xt$ and denote by $G$ the fibre of $q^*\sG$ over that point, we get a complex analytic Lie group $G$ and a group bundle isomorphism $q^*\sG \simeq \Xt \times G$. This means that when $\Xt$ is a contractible Stein manifold, every group bundle on $X$ is a $(\piX, G)$-bundle for some complex analytic Lie group $G$. This happens for instance when $X$ is a compact Riemann surface of genus $g_X \geq 1$ (so the universal cover of $X$ is a contractible open subset of the Riemann sphere), or when $X$ is a ball quotient in higher dimension (meaning of the form $\Gamma\backslash B^n$, where $B^n\subset \C^{n+1}$ is the open unit ball in $\C^{n+1}$ and $\Gamma < \mathbf{PU}(n,1)\simeq \Aut(B^n)$ is a discrete subgroup).

\smallskip 

Finally, as an immediate corollary of Theorem \ref{ClassifGpBdleTrivByCover}, we obtain that the Lie algebra of a $(\pi, G)$-group bundle $\sG$ on $X = \pi \backslash Y$ is the quotient by $\pi$ of the product bundle $Y \times \Lie(G)$.

\begin{corollary}
    Under the assumptions of Theorem \ref{ClassifGpBdleTrivByCover}, the Lie algebra of the group bundle $\sG \lra X$ satisfies $$\Lie(\sG) \simeq \pi \backslash \big(Y \times \Lie(G)\big)$$ where $f \in \pi$ acts to the left on $(y, v) \in Y \times \Lie(G)$ via $$f \cdot (y, v) := \big(f(y), \varphi_f(y)(v)\big),$$ the Lie algebra automorphism $v \lmt \varphi_f(y)(v)$ being, for all $y \in Y$, the derivative at the identity of the Lie group automorphism $g \lmt \varphi_f(y) (g)$.
\end{corollary}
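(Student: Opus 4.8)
The plan is to deduce the statement from the isomorphism $\sG \simeq \pi \backslash (Y \times G)$ furnished by Theorem \ref{ClassifGpBdleTrivByCover}, by checking that the formation of the Lie algebra bundle commutes, on the one hand, with pullback along the covering $q : Y \to X$ and, on the other hand, with the descent (the quotient by $\pi$) that recovers $\sG$ from $q^*\sG$. First I would recall that, by construction, $\Lie(\sG)$ is built from the very same gluing data $(h_{UV})$ as $\sG$, except that each transition isomorphism $h_{UV}(y) \in \Aut(\sG_x)$ is replaced by its derivative at the identity $d\big(h_{UV}(y)\big)_e \in \Aut\big(\Lie(\sG_x)\big)$. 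Since $q$ is a local isomorphism, a trivialising cover of $X$ pulls back to a trivialising cover of $Y$ with transition functions $h_{UV} \circ q$; because differentiation at the identity commutes with precomposition by $q$, this yields a canonical isomorphism $\Lie(q^*\sG) \simeq q^*\Lie(\sG)$.

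Next I would compute the two sides after pullback. Because $q^*\sG \simeq Y \times G$ as a group bundle (Definition \ref{pi_G_gp_bundle}), its Lie algebra bundle is the product bundle $Y \times \Lie(G)$, the Lie algebra of a product group bundle being the product with the Lie algebra of the fibre. Under this identification I would transport the canonical $\pi$-equivariant structure $(\tau_f)_{f\in\pi}$ on $q^*\sG$ to $Y \times \Lie(G)$: each $\tau_f$ is an isomorphism of group bundles lying over $f : Y \to Y$, so applying the Lie algebra functor fibrewise produces a lift $\Lie(\tau_f)$ of $f$ to $Y \times \Lie(G)$, satisfying the analogues of \eqref{action_lift} and \eqref{lift_compatible_with_product} by functoriality. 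Evaluating \eqref{twisted_action_of_pi_on_Y_times_G}, the fibre component of $\tau_f$ at $y$ is the group automorphism $\varphi_f(y) : G \to G$; hence the fibre component of $\Lie(\tau_f)$ at $y$ is exactly its derivative at the identity, which is the Lie algebra automorphism denoted again by $\varphi_f(y)$ in the statement. This gives precisely the action $f\cdot(y,v) = \big(f(y), \varphi_f(y)(v)\big)$.

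Finally, I would combine the two steps: since $\Lie(q^*\sG) \simeq q^*\Lie(\sG)$ is $\pi$-equivariant and $\sG \simeq \pi\backslash q^*\sG$, passing to the quotient by $\pi$ gives $\Lie(\sG) \simeq \pi \backslash \big(Y\times \Lie(G)\big)$ with the stated action. The step requiring the most care — and the one I expect to be the main obstacle — is justifying that the Lie algebra functor genuinely commutes with the quotient by $\pi$. This is really a descent statement: one must verify that the descent data defining $\sG$ out of $q^*\sG$ (namely the isomorphisms $\tau_f$) are carried by $\Lie$ to the descent data defining the candidate bundle $\pi\backslash(Y\times\Lie(G))$, so that the two quotients agree over $X \simeq \pi\backslash Y$. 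Since $q$ is \'etale, the quotient is a purely local operation on $X$, and this compatibility can be checked on a trivialising cover, where it reduces to the already-noted fact that taking derivatives at the identity commutes with the transition and gluing maps.
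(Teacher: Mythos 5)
Your argument is correct and follows essentially the same route as the paper, whose entire proof is the chain of isomorphisms $q^*(\Lie(\sG)) \simeq \Lie(q^*\sG) \simeq \Lie(Y \times G) \simeq Y \times \Lie(G)$ that you spell out and then descend along $q$. The extra care you take with the descent step---checking that the Lie functor carries the equivariant structure $(\tau_f)_{f\in\pi}$ to the stated action on $Y \times \Lie(G)$---is exactly what the paper leaves implicit, so your write-up is a faithful (and more detailed) version of the intended proof.
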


\begin{proof}
    Follows from the fact that $q^*(\Lie(\sG)) \simeq \Lie(q^*\sG) \simeq \Lie(Y \times G) \simeq Y \times \Lie(G)$.
\end{proof}

\section{Torsors}

In this section, we start with a general discussion about families of principal homogeneous spaces parameterised by an abstract topological space. Then we classify torsors under a group space over a complex analytic manifold. Finally, we show in what sense holomorphic vector bundles with non-trivial determinant correspond to torsors under a non-constant group.

\subsection{Principal homogeneous spaces}

Let $X$ be a topological space and let $\sG \to X$ be a group object in the category of spaces over $X$ (see Section \ref{GpSpace}). Intuitively, a right $\sG$-space is a space over $X$ on which $\sG$ acts fibrewise to the right. This means a pair $(\pi, \alpha)$ where
\begin{itemize}
    \item $\pi : \sP \to X$ is a space over $X$, and
    \item $\alpha : \sP \times_X \sG \to \sG$ is a morphism of spaces over $X$,
\end{itemize}
such that, for all $x \in X$, the restricted map 
$$
\alpha_x : \sP_x \times \sG_x \lra \sP_x 
$$
is a right group action in the category of topological spaces. The element $\alpha(p, g) \in \sP$, will often be denoted simply by $p \cdot g$. A morphism of right $\sG$-spaces between $\sP$ and $\sP'$ is a morphism $u : \sP \to \sP'$ of spaces over $X$ (meaning that $\pi' \circ u = \pi$, where $\pi' : \sP' \to X$) which is $\sG$-equivariant (meaning that $u(p \cdot g) = u(p) \cdot g$). Left $\sG$-spaces and morphisms between them are defined similarly.

\smallskip

Recall that the action of the group $\sG_x$ on the space $\sP_x$ is called \textit{transitive} if it has only one orbit. This is equivalent to asking that the map
\begin{equation}\label{charac_torsors}
    \begin{array}{rcl}
        \sP_x \times \sG_x & \lra & \sP_x \times \sP_x \\
        (p, g) & \lmt & (p, p \cdot g)
    \end{array}
\end{equation}
be surjective. In that case, the $\sG_x$-space $\sP_x$ is called a \textit{homogeneous} $\sG_x$-space. Similarly, the action is called \textit{free} if all stabilizers are trivial, and this is equivalent to asking that the map \eqref{charac_torsors} be injective. When the action is both transitive and free, the $\sG_x$-space $\sP_x$ is called a \textit{principal homogeneous} $\sG_x$-space, or $\sG_x$-\textit{torsor}. If $\sP_x$ is non-empty, this is equivalent to asking that $\sP_x$ be isomorphic, as a right $\sG_x$-space, to the space $\sG_x$ endowed with the action of $\sG_x$ on itself by right translations. Indeed, if $\sP_x$ is a principal homogeneous $\sG_x$-space, then for all $p \in \sP_x$, the map sending $g \in \sG_x$ to $p \cdot g$ is an isomorphism of right $\sG_x$-spaces. The discussion in this paragraph is purely punctual and we now want to discuss \textit{families} of principal homogeneous spaces under a group space $\sG \to X$.

\begin{definition}\label{torsor_def}
    Let $X$ be a topological space and let $\sG \to X$ be a group space over $X$. A right $\sG$-\textit{torsor} is a right $\sG$-space $\sP$ over $X$ such that, for all $x \in X$, there is an open neighbourhood $U$ of $x$ and an isomorphism of $\sG|_U$-spaces $\sG|_U \simeq \sP|_U$.
\end{definition}

\begin{example}\label{trivial_torsor}
Let $\sG$ be a group space over $X$. Then $\sG$ acts on itself by right translations and the identity map is an isomorphism of right $\sG$-spaces over $X$. We call this the \emph{trivial $\sG$-torsor} over $X$.
\end{example}

As a consequence of Definition \ref{torsor_def}, if $\sP$ is a right $\sG$-torsor over $X$, then $\sP$ \textit{admits local sections} over any open set $U \subset X$ for which there exists an isomorphism of right $\sG|_U$-spaces $\sG|_U \simeq \sP|_U$. So, a right $\sG$-torsor can be defined equivalently as a right $\sG$-space $q : \sP \to X$ such that:
\begin{enumerate}
    \item $q$ admits local sections, and
    \item the map $\psi : \sP \times_X \sG \to \sP \times_X \sP $, defined fibrewise as in \eqref{charac_torsors}, is an isomorphism of spaces over $X$.
\end{enumerate}
Indeed, we have just discussed the first implication (the fact that $\psi$ is fibrewise bijective follows from the existence of the isomorphisms $\sG|_U \simeq \sP|_U$). And for the converse, we have the following result.

\begin{theorem}\label{sectionImpliesTriviality}
    Let $X$ be a topological space and let $\sG$ be a group space over $X$. Let $\pi : \sP \to X$ be a right $\sG$-space over $X$ such that the map $\psi : \sP \times_X \sG \to \sP \times_X \sP $ defined fibrewise as in \eqref{charac_torsors} is an isomorphism of spaces over $X$. Let $U \subset X$ be an open subspace and assume that $\sP$ admits a section over $U$. Then there exists an isomorphism of right $\sG|_U$-spaces $\sG|_U \simeq \sP|_U$.
\end{theorem}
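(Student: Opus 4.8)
The plan is to use the section to transport the trivial torsor structure onto $\sP|_U$. Let $s : U \to \sP$ be the given section, so that $\pi \circ s = \mathrm{id}_U$, and define
$$
\Psi : \sG|_U \lra \sP|_U, \qquad g \lmt s(x) \cdot g \quad (\text{where } x \in U \text{ is the image of } g).
$$
This is the composite of the map $\sG|_U \to \sP \times_X \sG$, $g \mapsto (s(x), g)$ (continuous because $s$ is and because the target carries the fibre-product topology), with the action morphism $\alpha : \sP \times_X \sG \to \sP$, $(p,g) \mapsto p \cdot g$. The action $\alpha$ is continuous because it equals $\mathrm{pr}_2 \circ \psi$: indeed $\psi(p,g) = (p, p \cdot g)$, so its second component is precisely $p \cdot g$. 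Hence $\Psi$ is a morphism of spaces over $U$; it preserves fibres since $\pi(s(x) \cdot g) = x$, and it is $\sG|_U$-equivariant because $\Psi(gh) = s(x)\cdot(gh) = (s(x) \cdot g)\cdot h = \Psi(g) \cdot h$ by associativity of the action.

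To invert $\Psi$, I would exploit that $\psi$ is an isomorphism of spaces, not merely a fibrewise bijection. Since $U$ is open, $\psi$ restricts to an isomorphism $\sP|_U \times_U \sG|_U \overset{\simeq}{\lra} \sP|_U \times_U \sP|_U$ with continuous inverse $\psi^{-1}$. Define
$$
\Theta : \sP|_U \lra \sG|_U, \qquad p \lmt \mathrm{pr}_{\sG}\big(\psi^{-1}(s(\pi(p)), p)\big),
$$
that is, $\Theta(p)$ is the unique element $g \in \sG_x$, with $x = \pi(p)$, such that $s(x) \cdot g = p$. It is continuous, being the composite of the continuous map $p \mapsto (s(\pi(p)), p) \in \sP \times_X \sP$, the continuous map $\psi^{-1}$, and the projection onto $\sG$.

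Finally I would verify that $\Psi$ and $\Theta$ are mutually inverse, which is immediate from $\psi(s(x), g) = (s(x), s(x)\cdot g)$: applying $\psi^{-1}$ gives $\Theta(\Psi(g)) = \Theta(s(x)\cdot g) = g$, while $\Psi(\Theta(p)) = s(x) \cdot g = p$. This exhibits $\Psi$ as an isomorphism of right $\sG|_U$-spaces $\sG|_U \simeq \sP|_U$, as desired. The one substantive point — and the reason the statement is phrased with $\psi$ an \emph{isomorphism} rather than a fibrewise bijection — is the continuity of $\Theta$: fibrewise bijectivity of $\psi$ alone produces only a set-theoretic, section-by-section inverse, and it is exactly the global hypothesis that $\psi$ is an isomorphism of spaces over $X$ that furnishes the continuous $\psi^{-1}$ needed to trivialise $\sP|_U$ continuously.
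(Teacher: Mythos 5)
Your proposal is correct and follows essentially the same route as the paper: the paper's trivialising map $p \mapsto g_{\pi(p)} = \mathrm{pr}_2 \circ \psi_U^{-1}(\sigma(\pi(p)), p)$ is exactly your $\Theta$, and both arguments hinge on the same point you emphasise, namely that the hypothesis that $\psi$ is an isomorphism of spaces (not merely fibrewise bijective) supplies the continuous $\psi^{-1}$. Your write-up merely adds the explicit inverse $\Psi$ and the equivariance check, which the paper leaves implicit.
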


\begin{proof}
    Let $\sigma : U \to \sP$ be a section of $\sP$ over $U$. Since $\psi$ is an isomorphism of spaces over $X$, an element $p \in \sP|_U$ can be written $p = \sigma(y) \cdot g_y$ for a unique $g_y \in \sG_y$ and the map $\sP|_U \to \sG|_U$ defined by $p \lmt g_{\pi(p)}$ is a $\sG_y$-equivariant bijection. Moreover, it is an isomorphism of $\sG|_U$-spaces because $g_{\pi(p)} = \mathrm{pr}_2 \circ \psi_U^{-1}(\sigma(\pi(p)), p)$ over $U$, where $\mathrm{pr}_2 : \sP \times_X \sG \to \sG$ is the canonical morphism defined by the fibre product of two spaces over $X$.
\end{proof}

Note that Theorem \ref{sectionImpliesTriviality} holds more generally over an arbitrary subspace $Y \subset X$, not necessarily open, with the same proof. We have chosen to restrict the statement to open subspaces of $X$ because this is the only case we shall need (and also in order to avoid confusion later, when we work in the complex analytic category).

\begin{corollary}\label{globalSectionImpliesTriviality}
    Let $\sP$ be a right $\sG$-space such that the map $\psi : \sP \times_X \sG \to \sP \times_X \sP $ defined fibrewise as in \eqref{charac_torsors} is an isomorphism of spaces over $X$. Then $\sP \simeq \sG$ as right $\sG$-spaces if and only if $\sP$ admits a global section over $X$.
\end{corollary}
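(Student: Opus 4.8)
The plan is to prove Corollary \ref{globalSectionImpliesTriviality} as a direct application of Theorem \ref{sectionImpliesTriviality}, taking $U = X$, while supplying the easy converse by hand. The statement is an equivalence, so I would prove the two implications separately. Both directions will hinge on the crucial observation that the trivial $\sG$-torsor $\sG$ (Example \ref{trivial_torsor}) always admits a global section, namely the neutral section $\varepsilon : X \to \sG$.

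\smallskip

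First I would establish the forward implication. Suppose $\sP \simeq \sG$ as right $\sG$-spaces. Then $\sP$ admits a global section because $\sG$ does: the neutral section $\varepsilon : X \to \sG$ of the group space $\sG$ is a global section of the trivial $\sG$-torsor, and transporting it through the given isomorphism $\sG \overset{\simeq}{\to} \sP$ of $\sG$-spaces produces a global section of $\sP$. This direction requires no hypothesis beyond the existence of the isomorphism.

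\smallskip

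For the converse, I would simply invoke Theorem \ref{sectionImpliesTriviality} with the open subspace $U := X$, which is trivially open in itself. By hypothesis, $\psi : \sP \times_X \sG \to \sP \times_X \sP$ is an isomorphism of spaces over $X$, so the hypotheses of the theorem are met; if $\sP$ admits a global section $\sigma : X \to \sP$, the theorem produces an isomorphism of right $\sG|_X$-spaces $\sG|_X \simeq \sP|_X$, that is, an isomorphism of right $\sG$-spaces $\sG \simeq \sP$, which is exactly what we want. This is the whole content of the converse, and it is immediate once one notices that Theorem \ref{sectionImpliesTriviality} is already stated for an arbitrary open $U$.

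\smallskip

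I do not anticipate any real obstacle here, as the corollary is essentially a restatement of the preceding theorem in the special case $U = X$, combined with the trivial remark that the trivial torsor has a canonical global section. The only point worth stating carefully is that an isomorphism of $\sG$-spaces carries global sections to global sections, so that the forward implication genuinely reduces to exhibiting the neutral section of $\sG$.
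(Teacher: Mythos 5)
Your proof is correct and takes exactly the route the paper intends: the corollary is stated without proof precisely because it is the case $U = X$ of Theorem \ref{sectionImpliesTriviality}, with the forward implication supplied by transporting the neutral section $\varepsilon$ of $\sG$ through the isomorphism. Nothing is missing.
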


As a consequence of Definition \ref{torsor_def}, if $\sG$ is locally trivial over $X$, then so is any $\sG$-torsor $\sP$. If $x \in X$ and $U$ is an open neighbourhood of $x$ over which both $\sG$ and $\sP$ are trivial, then $\sG|_U \simeq U \times \sG_x$ as group bundles, and $\sP|_U$ is a principal $\sG_x$-bundle over $U$. So if $X$ is connected and $G$ is a topological group which is the typical fibre of $\sG$ over $X$, then $\sP|_U$ is a principal $G$-bundle over every open set $U$ that trivialises both $\sG$ and $\sP$. However, this does not mean that $\sP$ admits a globally defined $G$-action, so $\sP$ is not necessarily a principal $G$-bundle. In fact, principal $G$-bundles are special cases of $\sG$-torsors, in the following sense.

\begin{exercise}\label{torsors_under_constant_gps}
    Let $G$ be a topological group and let $\sG = G_X := X \times G$ be the constant group with fibre $G$ over $X$. Show that a right $G_X$-action on a space $\sP$ over $X$ is the same as a right $G$-action on $\sP$ by bundle automorphisms. Deduce from this and Definition \ref{torsor_def} that a $G_X$-space $\sP$ is a $G_X$-torsor if and only if it is a principal $G$-bundle.
\end{exercise}

As mentioned before, in these notes we are mostly interested in the case when $X$ is a complex analytic manifold and the group space $\sG$ is analytically locally trivial. Then we can define a $\sG$-torsor either as a complex analytic $\sG$-space $\sP$ (meaning a complex analytic manifold $\sP$ over $X$ endowed with a morphism of complex analytic manifolds $\alpha : \sP \times_X \sG \to \sP$ satisfying the axioms of a fibrewise group action) and such that, locally, there are isomorphisms of right $\sG|_U$-spaces $\sG|_U \simeq \sP|_U$, or as a complex analytic $\sG$-space $\sP \to X$ such that
\begin{enumerate}
    \item the morphism of complex analytic manifolds $q : \sP \to X$ is a \textit{submersion},
    \item the morphism of complex analytic manifolds 
    $$
    \begin{array}{rcl}
        \psi : \sP \times_X \sG & \lra & \sP \times_X \sP \\
        (p, g) & \lmt & (p, p \cdot g)
    \end{array}
    $$
    is an isomorphism.
\end{enumerate}
Indeed, the fact that $q : \sP \to X$ is a submersion guarantees that $\sP$ has local sections and that $\sP \times_X \sP$ is a complex analytic \textit{manifold}.  The fact that $\sP \to X$ is a submersion also guarantees that $\sP \times_X \sG$ is a manifold. As observed before, if $\sG$ is locally trivial and $\sP$ is a $\sG$-torsor in the sense discussed above, then $\sP$ is locally trivial as a holomorphic fibre bundle.

\begin{definition}\label{morphism_of_torsors}
    Let $\sP$ and $\sQ$ be $\sG$-torsors on $X$. A $\sG$-equivariant bundle morphism $u : \sP \to \sQ$ will be called a morphism of $\sG$-torsors.
\end{definition}

With this definition, we see that $\sG$-torsors form a full subcategory of the category of $\sG$-spaces. Moreover, the category of $\sG$-torsors is actually a groupoid.

\begin{proposition}\label{morphisms_of_torsors_are_invertible}
    Let $u : \sP \to \sQ$ be a morphism of $\sG$-torsors over $X$. Then $u$ is an isomorphism.
\end{proposition}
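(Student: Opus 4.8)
The plan is to construct an explicit inverse morphism $v : \sQ \lra \sP$ of $\sG$-torsors. The key tool is the \emph{division map} attached to $\sQ$: since $\sQ$ is a $\sG$-torsor, the morphism $\psi_\sQ : \sQ \times_X \sG \lra \sQ \times_X \sQ$, $(q, g) \lmt (q, q \cdot g)$, is an isomorphism of spaces over $X$, so composing its inverse with the second projection yields a morphism of spaces over $X$
$$
\delta : \sQ \times_X \sQ \lra \sG, \qquad (q_1, q_2) \lmt \delta(q_1, q_2),
$$
characterised by the identity $q_2 = q_1 \cdot \delta(q_1, q_2)$. From the uniqueness in this characterisation one checks the equivariance relations $\delta(q_1 \cdot a, q_2) = a^{-1}\,\delta(q_1, q_2)$ and $\delta(q_1, q_2 \cdot b) = \delta(q_1, q_2)\, b$, valid for $a, b$ in the appropriate fibre of $\sG$.

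Next I would build $v$ locally. Since $\sP$ admits local sections (being a $\sG$-torsor, its structure morphism is a submersion), I cover $X$ by open sets $U$ over which there is a section $s : U \to \sP|_U$ and, writing $y := \pi_\sQ(q)$ for the image of a point $q \in \sQ|_U$, I set
$$
v(q) := s(y) \cdot \delta\big(u(s(y)),\, q\big).
$$
This is a morphism of spaces over $X$ because it is assembled from $s$, the morphism $u$, the division map $\delta$ and the $\sG$-action, all of which are morphisms. A short computation using the first equivariance relation for $\delta$ shows that replacing $s$ by $s \cdot a$ (for $a$ a section of $\sG|_U$) leaves $v(q)$ unchanged; since any two local sections of $\sP$ differ by such an $a$, the locally defined maps agree on overlaps and glue to a global morphism $v : \sQ \lra \sP$ over $X$.

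Finally I would verify that $v$ is a two-sided inverse of $u$ and is itself $\sG$-equivariant. Writing any $p \in \sP|_U$ as $p = s(y) \cdot g$ with $g \in \sG_y$ (possible and unique by Theorem \ref{sectionImpliesTriviality}), equivariance of $u$ gives $u(p) = u(s(y)) \cdot g$, whence $\delta(u(s(y)), u(p)) = g$ and therefore $v(u(p)) = s(y) \cdot g = p$; conversely $u(v(q)) = u(s(y)) \cdot \delta(u(s(y)), q) = q$ by the defining property of $\delta$. Equivariance $v(q \cdot b) = v(q) \cdot b$ follows from the relation $\delta(q_1, q_2 \cdot b) = \delta(q_1, q_2)\, b$. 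Thus $v$ is an inverse morphism of $\sG$-torsors and $u$ is an isomorphism. The only genuinely delicate point is that $v$ be a morphism in the analytic (resp.\ topological) category and be globally well defined; both reduce to the fact that $\delta$ is a morphism of spaces over $X$ — which is exactly the content of the isomorphism condition on $\psi_\sQ$ — together with the independence-of-section check.
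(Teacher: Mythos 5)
Your proof is correct, but it takes a genuinely different route from the paper's. The paper localizes: it identifies both $\sP$ and $\sQ$ with the trivial torsor $\sG|_U$, observes that an equivariant endomorphism of $\sG|_U$ satisfies $u(g) = u(\eps(x))\,g$, i.e.\ is left multiplication by the section $u \circ \eps \circ p$, and inverts it by left multiplication by the pointwise inverse of that section. You instead work intrinsically with the division map $\delta = \mathrm{pr}_2 \circ \psi_{\sQ}^{-1}$ and a local section of $\sP$ only, writing down the inverse $v$ in closed form and checking independence of the chosen section so that the local definitions glue. Both arguments rest on the same two facts --- equivariance plus simple transitivity determine $u$ on each fibre by its value at a single point, and ``division'' is regular because $\psi$ is an isomorphism of spaces over $X$ --- but yours has the advantage of never trivializing $\sQ$ and of making the globalization step explicit, whereas the paper's local reduction leaves implicit that the fibrewise inverses patch into a regular global inverse. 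One small remark: your justification that $\sP$ admits local sections (``its structure morphism is a submersion'') is only the right reason in the complex analytic setting; in the general topological setting of Definition \ref{torsor_def} it is cleaner to say that $\sP$ is locally isomorphic to $\sG$, which has the neutral section $\eps$. This does not affect the validity of the argument.
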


\begin{proof}
    Since $u$ is a bundle morphism, it suffices to show that it is fibrewise invertible. But since $\sP$ and $\sQ$ are locally isomorphic to $\sG$ as $\sG$-spaces, we can assume that $u$ is a bundle morphism from $\sG|_U$ to $\sG|_U$ which is equivariant with respect to the action of $\sG|_U$ on itself by right translations. Such a morphism $u : \sG|_U \to \sG|_U$ satisfies, for all $x \in U$ and all $g \in \sG_x$,
    $$
    u \big( g \big) = u\big( \eps(x) g \big) = u\big( \eps(x) \big) g
    $$
    so it is entirely determined by the image under $u_x$ of the neutral element $\eps(x)$ of the group $\sG_x$. Note that $x = p(g)$ where $p : \sG \to X$, so $u(g) = (u \circ \eps \circ p)(g) g$ on $\sG|_U$ and this is a morphism of spaces over $U$ (the product between $(u\circ\eps\circ p)(g)$ and $g$ takes place in the group $\sG|_{p(g)}$). Moreover, an element $h \in \sG_x$ can be written uniquely in the form $h = u(\eps (x) ) g_h$, namely by taking $g_h := u(\eps(x))^{-1} h$, where $u(\eps(x))^{-1}$ is the inverse of $u(\eps(x))$ in the group $\sG_x$. This again depends homomorphically on $h \in \sG|_U$, so we can define an inverse to $u$ by setting $u^{-1}(h) := g_h$.
\end{proof}

\begin{exercise}
    Check that, when $\sG$ is a group space, the bundle map $u : \sG \to \sG$ defined by $g \mapsto (u \circ \eps \circ p)(g) g$ is $\sG$-equivariant with respect to right translations, and that an inverse of such a map is necessarily $\sG$-equivariant with respect to right translations.
\end{exercise}

Let $\sG \to X$ be a group space. Since $\sG$-torsors are generalizations of principal $G$-bundles (Exercise \ref{torsors_under_constant_gps}), it is natural to expect that $\sG$-torsors can be constructed by gluing trivial torsors (Example \ref{trivial_torsor}) along open subsets. Indeed, let $\sP$ be a (right) $\sG$-torsor (Definition \ref{torsor_def}) and let us fix an open cover $(U_i)_{i\in I}$ of $X$ as well as isomorphisms of $\sG|_{U_i}$-torsors $\phi_i : \sP|_{U_i} \to \sG|_{U_i}$. For all $i, j$ in $I$, we set $U_{ij} := U_i \cap U_j$. Then the complex analytic map
$$
\phi_i \circ \phi_j^{-1}:\sG|_{U_{ij}}\lra \sG|_{U_{ij}}
$$
is an automorphism of the trivial $\sG|_{U_{ij}}$-torsor. As noted in Proposition \ref{morphisms_of_torsors_are_invertible}, there is a bijection 
\begin{equation}\label{autom_of_torsors}
    \left\{\sG|_U \text{-equivariant automorphisms of } \sG|_U \right\} \overset{\simeq}{\longleftrightarrow} \left\{ \text{sections of } \sG|_U \right\}
\end{equation}
given by sending such an automorphism $u : \sG|_U \to \sG|_U$ to the section $s(x) := u(\eps(x))$. So the datum of the map $\phi_i \circ \phi_j^{-1}$ is equivalent to the datum of a section $g_{ij} \in \sO_X(\sG)(U_i \cap U_j)$, where $\sO_X(\sG)$ is the sheaf of regular sections of $\sG$ (and note that this holds without assuming that $\sG$ is locally trivial). The meaning of \textit{regular} here depends on the type of space that $X$ is. If $X$ is a topological space and $\sG$ is a group object over $X$ in the category of topological spaces, regular means continuous. And if $X$ is a complex analytic manifold and $\sG$ is a group object over $X$ in the category of complex analytic manifolds, regular means holomorphic. It is then clear that the regular map $g_{ij} : U_i \cap U_j \to \sG|_{U_{ij}}$ satisfies the cocycle relations $g_{ii} = \eps|_{U_i}$ (the neutral section of $\sG|_{U_i})$) and 
$$
g_{ij}g_{jk} = g_{ik}
$$ 
on $U_i \cap U_j \cap U_k$ (where the product of sections is defined pointwise by the product in $\sG$), because the $\phi_{ij} := \phi_i \circ \phi_j^{-1}$ satisfy $\phi_{ii} = \mathrm{Id}_{\sG|_{U_i}}$ and 
$$
(\phi_i \circ \phi_j^{-1}) \circ (\phi_j \circ \phi_k^{-1}) = \phi_i \circ \phi_k^{-1}$$
over $U_i \cap U_j \cap U_k$. Moreover, any other choice of local trivialisations $(V_j,\psi_j)_{j\in J}$ of the $\sG$-torsor $\sP$ gives rise to an equivalent cocycle $(h_{ij})_{(i,j)}$. Indeed, refining the cover if necessary, we can assume that $I = J$ and that $U_i = V_i$ for all $i \in I$. But then we can write $h_{ij} = u_i g_{ij} u_j^{-1}$, where $u_i \in \sO_X(\sG)(U_i)$ is the regular section of $\sG$ corresponding to the automorphism $\psi_i \circ \phi_i^{-1}$ of the trivial $\sG|_{U_i}$-torsor. Thus, the $\sG$-torsor $\sP$ determines a \v{C}ech cohomology class in $\check{H}^1(X,\sO_X(\sG))$. More generally, isomorphic $\sG$-torsors $\sP$ and $\sQ$ determine the same cohomology class.

\begin{proposition}\label{isomorphicTorsorsAreCohomologous}
    Let $\sG$ be a group space on $X$ and let $\sP$ and $\sQ$ be $\sG$-torsors. Let $(g_{ij})_{(i,j)}$ and $(h_{ij})_{(i,j)}$ be $\sG$-cocycles representing $\sP$ and $\sQ$. Then, $\sP\simeq \sQ$ as $\sG$-torsors if and only if there exists a family of regular sections $u_i : U_i\lra \sG|_{U_i}$ such that 
    $$
    h_{ij}=u_i g_{ij} u_j^{-1}
    $$
    for all $i,j\in I$. Equivalently, $\sP\simeq \sQ$ if and only if they represent the same cohomology class in $\check{H}^1(X,\sO_X(\sG))$.
\end{proposition}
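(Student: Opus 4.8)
The plan is to exploit the dictionary \eqref{autom_of_torsors} between $\sG|_U$-equivariant automorphisms of the trivial torsor $\sG|_U$ and regular sections of $\sG$ over $U$, which was already used to attach a cocycle to a torsor. First I would record the one algebraic fact that makes everything work, namely that this dictionary is not merely a bijection but a group isomorphism. Indeed, by the computation in the proof of Proposition \ref{morphisms_of_torsors_are_invertible}, an equivariant automorphism $u$ acts by $u(g) = s_u(p(g))\, g$, where $s_u(x) := u(\eps(x))$ is the associated section; composing two such automorphisms gives $(u \circ v)(g) = s_u(p(g))\, s_v(p(g))\, g$, so the section of $u \circ v$ is the pointwise product $s_u \cdot s_v$. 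Thus composition of equivariant automorphisms corresponds to pointwise multiplication of sections, and inversion to pointwise inversion.

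Next, after passing to a common refinement so that $\sP$ and $\sQ$ are trivialised over the same cover $(U_i)_{i \in I}$, with trivialisations $\phi_i : \sP|_{U_i} \to \sG|_{U_i}$ and $\psi_i : \sQ|_{U_i} \to \sG|_{U_i}$ producing the cocycles $g_{ij} = \phi_i \circ \phi_j^{-1}$ and $h_{ij} = \psi_i \circ \psi_j^{-1}$, I would treat the two implications separately. For the forward direction, given an isomorphism of $\sG$-torsors $u : \sP \to \sQ$, the composite $\psi_i \circ u \circ \phi_i^{-1}$ is an equivariant automorphism of $\sG|_{U_i}$, hence corresponds to a section $u_i$ of $\sG$ over $U_i$. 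On $U_{ij}$ one computes, at the level of automorphisms,
\[
(\psi_i \circ u \circ \phi_i^{-1}) \circ (\phi_i \circ \phi_j^{-1}) \circ (\phi_j \circ u^{-1} \circ \psi_j^{-1}) = \psi_i \circ \psi_j^{-1} = h_{ij},
\]
and translating this equality through the group isomorphism above yields exactly $h_{ij} = u_i\, g_{ij}\, u_j^{-1}$ as sections.

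For the converse, suppose regular sections $u_i$ with $h_{ij} = u_i g_{ij} u_j^{-1}$ are given, and let $\widetilde{u}_i$ denote the equivariant automorphism of $\sG|_{U_i}$ corresponding to $u_i$. I would define local morphisms $F_i := \psi_i^{-1} \circ \widetilde{u}_i \circ \phi_i : \sP|_{U_i} \to \sQ|_{U_i}$, each a composite of $\sG$-equivariant maps. The point is to check that they agree on overlaps: the condition $F_j^{-1} \circ F_i = \mathrm{id}$ on $\sP|_{U_{ij}}$ unwinds, using the dictionary once more, to the identity $u_j^{-1} h_{ji} u_i = g_{ji}$ of sections, which is precisely the hypothesis $h_{ji} = u_j g_{ji} u_i^{-1}$ (the coboundary relation with indices swapped). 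Hence the $F_i$ glue to a global $\sG$-equivariant morphism $F : \sP \to \sQ$, which is automatically an isomorphism by Proposition \ref{morphisms_of_torsors_are_invertible}. The final clause of the statement, identifying this with equality of classes in $\check{H}^1(X, \sO_X(\sG))$, is then just the definition of the \v{C}ech equivalence relation together with the remark, established just before the statement, that the class is independent of the chosen trivialisations.

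I expect the main obstacle to be bookkeeping rather than conceptual: one must keep straight the order of composition against the order of multiplication of sections (the dictionary is a genuine homomorphism, so no order reversal occurs, but this has to be verified rather than assumed), and one must confirm that the overlap condition for the $F_i$ collapses to exactly the index-swapped coboundary relation and not to some twisted variant. Since $\sG$ is in general nonabelian, the product $u_i g_{ij} u_j^{-1}$ cannot be rearranged, so care is needed to write every product in the correct order throughout.
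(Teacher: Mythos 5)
Your proposal is correct and follows essentially the same route as the paper: both arguments rest on the dictionary \eqref{autom_of_torsors} between $\sG|_{U_i}$-equivariant automorphisms of the trivial torsor and regular sections, applied to the composites $\psi_i \circ F \circ \varphi_i^{-1}$. You simply spell out more of what the paper leaves implicit in its commutative diagram, namely the verification that the dictionary is a group homomorphism and the explicit gluing argument for the converse direction, both of which check out.
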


\begin{proof}
    Since $(U_i)_{i\in I}$ trivializes both $\sP$ and $\sQ$, then there are isomorphisms of $\sG|_{U_i}$-torsors
    $$
    \varphi_i:\sP|_{U_i}\lra \sG|_{U_i}\text{ and }\psi_i:\sQ|_{U_i}\lra \sG|_{U_i}.
    $$
    So if $\sP\overset{F}{\lra}\sQ$ is an isomorphism, then $\psi_i\circ F\circ\varphi_i^{-1}:\sG|_{U_i}\to \sG|_{U_i}$ is an automorphism so, in view of the bijection defined in \eqref{autom_of_torsors}, it corresponds to a regular section $u_i:U_i\to \sG|_{U_i}$. And since over the open set $U_{ij} = U_i \cap U_j$, we have a commutative diagram 
    \begin{center}
        \tikzcdset{arrow style=tikz, diagrams={>=angle 90}}
        \begin{tikzcd} [column sep =large, row sep =large]
            \sG|_{U_{ij}} \arrow[d,"u_j",'] \arrow[to=1-3,bend left,"g_{ij}"]& \sP|_{U_{ij}} \arrow[l,"\varphi_j",']\arrow[r,"\varphi_i"]\arrow[d,"F"]& \sG|_{U_{ij}} \arrow[d,"u_i"]\\
            \sG|_{U_{ij}}\arrow[to=2-3,bend right,"h_{ij}",']& \sQ|_{U_{ij}} \arrow[l,"\psi_j",']\arrow[r,"\psi_i"]& \sG|_{U_{ij}}  
        \end{tikzcd}
    \end{center} 
    the conclusion follows. 
\end{proof}

\begin{corollary}
    Under the hypotheses and notation of the previous proposition, $\sP\simeq \sG$ as $\sG$-torsors if and only if $\sP$ can be represented by a cocycle that is actually a coboundary, i.e. $g_{ij}=u_iu_j^{-1}$ for all $i,j\in I$.
\end{corollary}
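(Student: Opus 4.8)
The plan is to deduce this directly from Proposition \ref{isomorphicTorsorsAreCohomologous}, specialised to the case where $\sQ$ is the trivial $\sG$-torsor of Example \ref{trivial_torsor}. The first step is to identify a cocycle representing that trivial torsor: taking the identity trivialisation $\psi_i = \mathrm{Id}_{\sG|_{U_i}}$ on each open set of the cover, every transition automorphism $\psi_i \circ \psi_j^{-1}$ is the identity, which under the bijection \eqref{autom_of_torsors} corresponds to the neutral section $\eps|_{U_{ij}}$. Hence the trivial $\sG$-torsor is represented by the constant cocycle $h_{ij} = \eps|_{U_{ij}}$.

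Next I would apply Proposition \ref{isomorphicTorsorsAreCohomologous} with $\sQ = \sG$ and this cocycle: the isomorphism $\sP \simeq \sG$ of $\sG$-torsors holds if and only if there exist regular sections $u_i : U_i \to \sG|_{U_i}$ satisfying $\eps|_{U_{ij}} = u_i\, g_{ij}\, u_j^{-1}$ for all $i,j \in I$. Rearranging this identity inside the (possibly non-abelian) groups $\sG|_{U_{ij}}$ gives the equivalent condition $g_{ij} = u_i^{-1} u_j$.

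The only remaining point is to match this with the stated coboundary form $g_{ij} = v_i v_j^{-1}$. I would set $v_i := \iota \circ u_i$, the section obtained by composing $u_i$ with the fibrewise inversion $\iota : \sG \to \sG$; since $\iota$ is a morphism of spaces over $X$, each $v_i$ is again a regular section of $\sG|_{U_i}$, and one reads off $g_{ij} = u_i^{-1} u_j = v_i v_j^{-1}$. Conversely, given a coboundary relation $g_{ij} = v_i v_j^{-1}$, setting $u_i := \iota \circ v_i$ recovers $\eps|_{U_{ij}} = u_i\, g_{ij}\, u_j^{-1}$, so Proposition \ref{isomorphicTorsorsAreCohomologous} yields $\sP \simeq \sG$.

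I do not expect any genuine obstacle, as the statement is a direct specialisation of the previous proposition. The only two things to be careful about are the order of the products, since the fibres $\sG_x$ need not be abelian (this is exactly what forces the inversion step rather than an immediate reading-off of the coboundary), and the verification that replacing a section by its fibrewise inverse again produces a regular section, which is guaranteed because $\iota$ is part of the group-space structure.
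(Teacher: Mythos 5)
Your proposal is correct and is exactly the derivation the paper intends (the corollary is stated without proof as an immediate specialisation of Proposition \ref{isomorphicTorsorsAreCohomologous} to $\sQ = \sG$ with the neutral-section cocycle). The rearrangement $\eps|_{U_{ij}} = u_i\,g_{ij}\,u_j^{-1} \iff g_{ij} = u_i^{-1}u_j$ and the relabelling via fibrewise inversion are handled correctly.
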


Conversely, if $p: \sG \to X$ is a group space over $X$, any \v{C}ech $1$-cocycle $(g_{ij} : U_i \cap U_j \to \sG)_{(i,j)}$ gives rise to a $\sG$-torsor $\sP$, whose total space is
$$
\sP := \left(\bigsqcup_{i\in I}\sG|_{U_i} \right)\bigg /\sim
$$
where $\sim$ is the equivalence relation defined as 
$$
(i,h) \sim (j, h') \iff p(h) = p(h') =: x \in U_i \cap U_j \text{ and } h = g_{ij}(x) h' .
$$
The canonical projections $\sG|_{U_i}\lra U_i$ give a well-defined map $\pi:\sP\lra X$ and,
for all $i\in I$, we have a bijection $\varphi_i:\pi^{-1}(U_i)\lra \sG|_{U_i}$ which we can use to endow $\sP$ with topology (and a complex analytic structure if need be). 
The right action of $\sG|_{U_i}$ on itself induces the well-defined (\emph{global}) $\sG$-action 
$$
\begin{array}{rcl}
    \sP\times_X\sG&\lra & \sP \\
   ([h],g)&\longmapsto & [hg]
\end{array}.
$$ 
By construction, for all $i\in I$, the map $\varphi_i$ is $\sG|_{U_i}$-equivariant with respect to this action so $\sP\lra X$ is a $\sG$-torsor, admitting $(g_{ij})_{(i,j)}$ as a cocycle of transition functions. Moreover, if the cocycle $(g_{ij})_{(i,j)}$ comes from a $\sG$-torsor $\sQ$ in the first place, then by Proposition \ref{isomorphicTorsorsAreCohomologous}, we have an isomorphism of $\sG$-torsors $\sP\simeq \sQ$. We can sum up the previous discussion as follows.

\begin{theorem}\label{isomorphismH1andIsoClassesOfTorsors}
Let $\sG\lra X$ be a group space over $X$ and $\sO_X(\sG)$ the sheaf of regular sections of $\sG$.  Then, the \v{C}ech cohomology group $\check{H}^1(X,\sO_X(\sG))$ is in bijection with the set of isomorphism classes of $\sG$-torsors over $X$. 
\end{theorem}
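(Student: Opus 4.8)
The plan is to exhibit an explicit bijection $\Theta$ between the set of isomorphism classes of $\sG$-torsors on $X$ and the set $\check{H}^1(X,\sO_X(\sG))$, drawing almost entirely on the constructions already assembled above. Since $\sO_X(\sG)$ is in general a sheaf of non-abelian groups, I understand $\check{H}^1$ as a pointed set, with base point the class of the constant cocycle $g_{ij} = \eps$, corresponding to the trivial torsor of Example \ref{trivial_torsor}.

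First I would define the map from torsors to cohomology. Given a $\sG$-torsor $\sP$, I choose a trivializing cover $(U_i)_{i\in I}$ together with isomorphisms $\phi_i : \sP|_{U_i} \to \sG|_{U_i}$; the bijection \eqref{autom_of_torsors} turns each transition automorphism $\phi_i \circ \phi_j^{-1}$ into a regular section $g_{ij} \in \sO_X(\sG)(U_i \cap U_j)$, and these satisfy the cocycle relations $g_{ii} = \eps|_{U_i}$ and $g_{ij}g_{jk} = g_{ik}$, as verified in the discussion preceding Proposition \ref{isomorphicTorsorsAreCohomologous}. Setting $\Theta([\sP])$ to be the class of $(g_{ij})$ in $\check{H}^1(X,\sO_X(\sG))$, I must check this is independent of all choices. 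The same discussion shows that refining the cover or changing the trivializations replaces $(g_{ij})$ by a cohomologous cocycle $h_{ij} = u_i g_{ij} u_j^{-1}$, which is precisely the equivalence defining the direct limit over covers; hence $\Theta$ descends to a well-defined map on isomorphism classes.

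Injectivity of $\Theta$ is then immediate from Proposition \ref{isomorphicTorsorsAreCohomologous}, which asserts that two torsors are isomorphic exactly when their cocycles are cohomologous. For surjectivity I would invoke the gluing construction carried out after the corollary above: starting from an arbitrary cocycle $(g_{ij})$ representing a given class, one forms the quotient space $\sP = \big(\bigsqcup_{i\in I}\sG|_{U_i}\big)/\!\sim$, endows it with the glued (global) $\sG$-action and the local trivialisations $\phi_i$, and checks it is a $\sG$-torsor whose associated cocycle is $(g_{ij})$ itself; thus $\Theta([\sP])$ equals the prescribed class.

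The only genuinely delicate point is the bookkeeping inherent to \v{C}ech cohomology as a direct limit: a class is represented by a cocycle over some cover, and well-definedness of $\Theta$ requires comparing cocycles attached to $\sP$ over different, a priori incomparable, covers. This is resolved by passing to a common refinement and using the normalisation ``$I = J$ and $U_i = V_i$'' already employed before Proposition \ref{isomorphicTorsorsAreCohomologous}, so that no further work beyond carefully assembling the existing pieces is required.
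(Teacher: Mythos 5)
Your proposal is correct and follows essentially the same route as the paper: the theorem is stated there as a summary of the preceding discussion, which consists of exactly the pieces you assemble — the cocycle attached to a choice of local trivialisations via the bijection \eqref{autom_of_torsors}, independence of choices up to cohomologous cocycles, injectivity via Proposition \ref{isomorphicTorsorsAreCohomologous}, and surjectivity via the gluing construction. Your explicit attention to the direct-limit bookkeeping and to $\check{H}^1$ being a pointed set in the non-abelian case is a welcome precision but not a departure from the paper's argument.
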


In particular, when $\sG = G_X$ is a constant group, we recover the set of isomorphism classes of principal $G$-bundles on $X$ (see Exercise \ref{torsors_under_constant_gps}).

\subsection{Torsors trivialised by a cover}

Let $\sG$ be a group space on $X$ and let $\sP$ be right $\sG$-torsor. We call $\sP$ \emph{trivial} as a $\sG$-torsor if there exists an isomorphism of $\sG$-torsors $\sP \simeq \sG$. By Corollary \ref{globalSectionImpliesTriviality}, this is equivalent to the existence of a global section for $\sP$. Let us now fix a Galois cover $q : Y \to X$, with automorphism group $\pi = \Aut_X(Y)$, and study $\sG$-torsors $\sP$ over $X$ such that $q^* \sP \simeq q^*\sG$, i.e.\ such that $\sP$ becomes trivial after pullback to the total space of the cover. In the present subsection, we classify such torsors. We then specialise the result to the case when $\sG$ is a $(\pi,G)$-group bundle in the sense of Definition \ref{pi_G_gp_bundle}, meaning that we classify $\sG$-torsors $\sP$ on $X$ that become \textit{trivial principal $G$-bundles} after pullback to $Y$. Such $\sG$-torsors are sometimes called $(\pi, G)$-bundles (\cite{Seshadri_parab_remarks}).

\smallskip

The way to approach this is similar to what we did for Theorem \ref{ClassifGpBdleTrivByCover}. First we observe that if $\sP$ is a $\sG$-torsor on $X$ and $q : Y \to X$ is a Galois cover with automorphism group $\pi$, then $q^* \sP$ is a $\pi$-equivariant $q^* \sG$-torsor on $Y$. It will be convenient to introduce the notation $\hP := q^* \sP$ and $\hG := q^* \sG$. We leave it as an exercise to check that $\hP$ is a $\hG$-torsor. To say that it has a $\pi$-equivariant structure means that there is a collection of maps $(\sigma_f)_{f\in\pi}$ satisfying the following conditions:

\begin{enumerate}
    \item $\sigma_{\mathrm{id}_{Y}} = \mathrm{id}_{\hP}$ and the following diagram is commutative:
    \begin{equation}\label{action_lift_torsors}
        \begin{tikzcd}
            \hP \arrow[to=1-2,"\sigma_f"] \arrow[to=2-1] & \hP \arrow[to=2-2]\\
            Y \arrow[to=2-2,"f"] & Y
        \end{tikzcd}
    \end{equation} 
    \item For all $f_1, f_2 \in \pi$, 
    \begin{equation}\label{lift_compatible_with_product_torsors}
        \sigma_{f_1 \circ f_2} = \sigma_{f_1} \circ \sigma_{f_2}.
    \end{equation}
    \item For all $f\in \pi$, all $y \in Y$, all $p \in \hP_y$ and all $g \in \hG_y$, 
    \begin{equation}\label{lift_compatible_with_action}
    \sigma_f(p \cdot g) = \sigma_f(p) \cdot \tau_f (g).
    \end{equation} where $\tau_f(g)$ denotes the effect on $g$ of the canonical $\pi$-equivariant structure $\tau$ of the group bundle $\hG$, constructed in \eqref{def_action_lift}.
\end{enumerate}

\noindent So if the $\hG$-torsor $\hP$ is trivial, i.e.\ isomorphic to $\hG$ as a right $\hG$-torsor, we inherit a $\pi$-equivariant structure $\sigma$ on the right $\hG$-torsor $\hG$, and these are the structures that we will end up classifying (Theorem \ref{ClassifTorsorTrivByCover}).

\smallskip

In what follows, we denote by $\Gamma(Y, \hG)$ the group of sections of the group bundle $\hG$. Since the fibres of $\hG$ are groups, the set $\Gamma(Y, \hG)$ is indeed a group with respect to pointwise multiplication
\begin{equation}\label{product_on_sections_of_group_space}
    (\alpha_1 \cdot \alpha_2)(y) := \alpha_1(y) \alpha_2(y).
\end{equation}
Moreover, the group $\pi = \Aut_X(Y)$ acts to the right by group automorphisms on the group 
$\Gamma(Y, \hG)$, via 
\begin{equation}\label{action_pi_on_sections}
    \alpha^{f} := \tau_f^{-1} \circ (\alpha \circ f)
\end{equation}
where again $\tau$ is the canonical $\pi$-equivariant structure of the group bundle $\hG$, constructed in \eqref{def_action_lift}. So there is a notion of crossed morphism 
$\psi : \pi \to \Gamma(Y, \hG)$, which we now introduce formally.
\begin{definition}
    A crossed morphism $\psi : \pi \to \Gamma(Y, \hG)$ with respect to the $\pi$-action defined in Equation \ref{action_pi_on_sections}, is a map satisfying 
    \begin{equation}\label{crossed_morphism_into_group_of_sections}
        \psi_{f_1 \circ f_2} = \psi_{f_1}^{f_2} \cdot \psi_{f_2}
    \end{equation} 
    where we denote by $\psi_f : Y \to \hG$ the image of $f \in \pi = \Aut_X(Y)$ under $\psi$.
\end{definition}
Since the action of $\pi$ on $\Gamma(Y, \hG)$ depends on the $\pi$-equivariant structure $\tau$ of $\hG$, we will denote by $Z^1_\tau(\pi, \Gamma(Y, \hG))$ the set of crossed morphisms $\psi : \pi \to \Gamma(Y, \hG)$. Then, the group $\Gamma(Y, \hG)$ acts on $Z^1_\tau(\pi, \Gamma(Y, \hG))$ via
\begin{equation}\label{action_of_sections_on_crossed_morphisms}
    (\alpha \bullet \psi)_f := \alpha^f \cdot \psi_f \cdot \alpha^{-1}
\end{equation}
and where $\alpha \in \Gamma(Y, \hG)$ and $f \in \pi$. To conclude, note that, since each $\tau_f$ is fibrewise a group automorphism, we have
\begin{equation}\label{compatibility_action_with_product_of_sections}
    (\alpha_1 \cdot \alpha_2)^f = \alpha_1^f \cdot \alpha_2^f
\end{equation}
for all $f \in \pi$ and all $\alpha_1, \alpha_2 \in \Gamma(Y, \hG)$.

\begin{exercise}
    Check that the map $(\alpha \bullet \psi) : \pi \to \Gamma(Y, \hG)$ defined in \eqref{action_of_sections_on_crossed_morphisms} is indeed a crossed morphism. Using the compatibility relation \eqref{compatibility_action_with_product_of_sections} between the action of $\pi$ on $\Gamma(Y, \hG)$ and the group structure of $\Gamma(Y, \hG)$, show moreover that $(\alpha_1 \cdot \alpha_2) \bullet \psi = \alpha_1 \bullet (\alpha_2 \bullet \psi)$, so that we have indeed a left action of the group $\Gamma(Y, \hG)$ on the set of crossed morphisms $Z^1_\tau(\pi, \Gamma(Y, \hG))$.
\end{exercise}

We then have the following classification result for $\sG$-torsors that become trivial after pullback to the total space of a Galois cover.

\begin{theorem}\label{ClassifTorsorTrivByCover}
    Let $\sG$ be a group space on $X$. Assume that $X$ is connected and let $q : Y \to X$ be a Galois cover, with automorphism group $\pi := \Aut_X(Y)$.
    \begin{enumerate}
        \item If $\sP$ is a right $\sG$-torsor such that $\hP:= q^* \sP$ is isomorphic to $\hG := q^* \sG$ as a $\hG$-torsor, then there exists a crossed morphism (in the sense of Equation \eqref{crossed_morphism_into_group_of_sections})
        \begin{equation}\label{crossed_morphism_from_pi_to_sections}
            \psi : \pi \lra \Gamma(Y, \hG)
        \end{equation}
        and an isomorphism of $\sG$-torsors $$\sP \simeq \pi \backslash \hG$$ where $f \in \pi$ acts to the left on $g \in \hG_y$ via the $\pi$-equivariant structure
        \begin{equation}\label{twisted_action_of_pi_on_hG}
            \sigma_f (g) := \tau_f\big( \psi_f(y) g \big)
        \end{equation}
        and $\tau$ is the canonical $\pi$-equivariant structure of the group bundle $\hG$, constructed in \eqref{def_action_lift}.
        \item Moreover, if $\psi$ and $\psi'$ are crossed morphisms from $\pi$ to $\Gamma(Y, \hG)$, then the associated $\sG$-torsors $\sP$ and $\sP'$ are isomorphic over $X$ if and only if $\psi$ and $\psi'$ are conjugate under the left action of $\Gamma(Y, \hG)$ defined in \eqref{action_of_sections_on_crossed_morphisms}.
    \end{enumerate} 
    So, the set of isomorphism classes of right $\sG$-torsors that become trivial after pullback to $Y$ is in bijection with the group cohomology set 
    $$
    H^1_\tau\big(\pi, \Gamma(Y, \hG)\big) := 
    \Gamma(Y, \hG) \ \big\backslash \ Z^1_\tau\big(\pi, \Gamma(Y, \hG)\big)\ .
    $$
\end{theorem}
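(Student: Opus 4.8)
The plan is to follow the same strategy as in the proof of Theorem \ref{ClassifGpBdleTrivByCover}, replacing the group bundle $Y\times G$ by the trivial $\hG$-torsor $\hG$ and the group $\Maps(Y,\Aut(G))\simeq\Aut(G_Y)$ by the group $\Gamma(Y,\hG)$ of sections of $\hG$, which by the bijection \eqref{autom_of_torsors} is exactly the automorphism group of the trivial $\hG$-torsor (composition corresponding to the pointwise product \eqref{product_on_sections_of_group_space}). The first reduction is descent along the Galois cover $q$: exactly as for group spaces, one has a canonical isomorphism of $\sG$-torsors $\sP\simeq\pi\backslash\hP$, where $\pi$ acts on $\hP=q^*\sP$ via its canonical $\pi$-equivariant structure $\sigma$. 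Combining this with the hypothesis $\hP\simeq\hG$ and transporting $\sigma$ across that isomorphism, the problem reduces to classifying $\pi$-equivariant structures $(\sigma_f)_{f\in\pi}$ on the trivial torsor $\hG$, together with the $\pi$-equivariant isomorphisms between them.

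For part (1), I would exploit the one feature that the trivial torsor has and a general torsor does not, namely the global neutral section $\eps$. Writing any $g\in\hG_y$ as $g=\eps(y)\cdot g$ and using the compatibility \eqref{lift_compatible_with_action} of $\sigma_f$ with the action, together with the fact that $\tau_f$ is fibrewise a group homomorphism, one sees that $\sigma_f$ is completely determined by its value on $\eps$. This suggests setting $\psi_f(y):=\tau_f^{-1}\big(\sigma_f(\eps(y))\big)\in\hG_y$, which defines a section $\psi_f\in\Gamma(Y,\hG)$ and reproduces precisely Formula \eqref{twisted_action_of_pi_on_hG}. The crossed morphism identity \eqref{crossed_morphism_into_group_of_sections} then falls out of $\sigma_{f_1\circ f_2}=\sigma_{f_1}\circ\sigma_{f_2}$ (Property \eqref{lift_compatible_with_product_torsors}) and $\tau_{f_1\circ f_2}=\tau_{f_1}\circ\tau_{f_2}$ (Property \eqref{lift_compatible_with_product}), after applying $\tau_{f_2}^{-1}$ and using once more that $\tau_{f_2}^{-1}$ is a fibrewise homomorphism; the bookkeeping reproduces the $\pi$-action \eqref{action_pi_on_sections} on $\Gamma(Y,\hG)$ and yields $\psi_{f_1\circ f_2}=\psi_{f_1}^{f_2}\cdot\psi_{f_2}$.

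For part (2), an isomorphism $\sP\simeq\sP'$ pulls back to a $\pi$-equivariant isomorphism $\hG\to\hG$ intertwining the two $\pi$-equivariant structures $\sigma$ and $\sigma'$ attached to $\psi$ and $\psi'$. By the bijection \eqref{autom_of_torsors}, such an automorphism is left translation $g\mapsto\alpha(y)\,g$ by a section $\alpha\in\Gamma(Y,\hG)$. Imposing the intertwining relation $\alpha\circ\sigma_f=\sigma'_f\circ\alpha$ on an element $g\in\hG_y$, expanding both sides with Formula \eqref{twisted_action_of_pi_on_hG}, using that $\tau_f$ is a fibrewise homomorphism and cancelling the common right factor $\tau_f(g)$, I expect to arrive exactly at $\psi'_f(y)=\tau_f^{-1}(\alpha(f(y)))\cdot\psi_f(y)\cdot\alpha(y)^{-1}$, that is $\psi'=\alpha\bullet\psi$ in the notation of \eqref{action_of_sections_on_crossed_morphisms}. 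This shows that isomorphic torsors correspond to crossed morphisms in the same $\Gamma(Y,\hG)$-orbit, and conversely any $\alpha$ produces such an isomorphism; together with part (1) this gives the announced bijection with $H^1_\tau(\pi,\Gamma(Y,\hG))$.

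The computations themselves are routine, so the real content — and the step I would be most careful about — is twofold. First, the descent step must be justified: that $\sP\mapsto(\text{the }\pi\text{-equivariant structure on }\hP\simeq\hG)$ is an equivalence, so that $\pi\backslash\hG$ is genuinely a $\sG$-torsor on $X$ and morphisms downstairs correspond bijectively to $\pi$-equivariant morphisms upstairs. This is Galois descent for $q:Y\to X$ and is where the hypotheses that $q$ is a Galois cover (so $\pi$ acts freely and transitively on fibres, with $X\simeq\pi\backslash Y$) are really used. Second, unlike the group bundle case, $\sigma_f$ is only compatible with the action and is \emph{not} itself a homomorphism, so throughout one must track carefully in which fibre — $\hG_y$ or $\hG_{f(y)}$ — each product is taken, and apply the homomorphism property only to $\tau_f$, never to $\sigma_f$.
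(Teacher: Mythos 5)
Your proposal is correct and follows essentially the same route as the paper: reduce by Galois descent to classifying $\pi$-equivariant structures on the trivial torsor $\hG$, define $\psi_f(y):=\tau_f^{-1}\big(\sigma_f(\eps(y))\big)$ using the neutral section, and identify automorphisms of the trivial torsor with sections of $\hG$ via \eqref{autom_of_torsors} to get the $\bullet$-action in part (2). Your explicit warnings (justifying the descent equivalence, and never applying the homomorphism property to $\sigma_f$ itself) are exactly the points the paper's proof also relies on.
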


\begin{proof}
    Let $\sP$ be a $\sG$-torsor on $X = \pi \backslash Y$, whose pullback to $Y$ is a trivial torsor $\hP := q^* \sP$, meaning that $\hP$ is isomorphic to $\hG := q^*\sG$ as a (trivial) $\hG$-torsor. We denote by $(\sigma_f)_{f \in \pi}$ the $\pi$-equivariant structure of $\hP$ in the sense of \eqref{action_lift_torsors}. This $\pi$-equivariant structure is constructed as in \eqref{def_action_lift}. Because of Property \eqref{lift_compatible_with_action}, the fact that $\hP$ is a $\hG$-torsor implies that, for all $f \in \pi$, the map $\sigma_f$ is entirely determined by $\sigma_f(\eps(y))$, where $\eps : Y \to \hG$ is the neutral section of the group space $\hG$. Let us now identify $\hP$ with $\hG$ and set, for all $f \in \pi$ and all $y \in Y$,
    \begin{equation}\label{equiv_struc_as_crossed_morphism}
        \psi_f(y) := \tau_f^{-1} \circ \sigma_f\big( \eps(y) \big)
    \end{equation}
    where $(\tau_f)_{f\in\pi}$ is the canonical $\pi$-equivariant structure of $\hG$. Then $\psi_f \in \Gamma(Y, \hG)$ is a global section of the group bundle $\hG$, from which we can reconstruct $\sigma_f$ by setting $\sigma_f(y) := \tau_f (\psi_f(y))$ and verifying that this map satisfies indeed the axioms of a $\pi$-equivariant structure on the $\hG$-torsor $\hG$. Using the definition of $\psi_f$, as well as Property \eqref{lift_compatible_with_product_torsors} and the fact that each $\tau_f$ is fibrewise a group automorphism on $\hG$, a simple calculation shows that $\psi : \pi \to \Gamma(Y, \hG)$ is a crossed morphism in the sense of \eqref{crossed_morphism_into_group_of_sections}, with respect to the $\pi$-action on $\Gamma(Y, \hG)$ defined in \eqref{action_pi_on_sections}. Since there is an isomorphism of $\sG$-torsors $\sP \simeq \pi \backslash \hP$, where $\pi$ acts on $\hG$ via the $\pi$-equivariant structure $(\sigma_f)_{f\in \pi}$, and since $\hP$ is $\pi$-equivariantly isomorphic to the right $\hG$-torsor $\hG$ endowed with the $\pi$-equivariant structure induced by $\psi$, we  have proven the first part of Theorem \ref{ClassifTorsorTrivByCover}. 
    
    \smallskip

    To prove the second part, observe that if $\psi$ and $\psi'$ are crossed morphisms from $\pi$ to $\Gamma(Y,\hG)$ that induce isomorphic $\sG$-torsors $\sP$ and $\sP'$ over $X$, then by pullback there is an automorphism $\alpha$ of the trivial $\hG$-torsor $\hG$ that conjugates the $\pi$-equivariant structures $\sigma$ and $\sigma'$, associated respectively to $\psi$ and $\psi'$. Because every automorphism of the trivial $\hG$-torsor is entirely determined by the image of the neutral section (see the proof of Proposition \ref{morphisms_of_torsors_are_invertible}), the previous observation means that there is a section $\alpha \in \Gamma(Y,\hG)$ such that, all $f \in \pi$, all $y \in Y$ and all $h \in \hG_y$,
    \begin{equation}\label{conjugate_equiv_struc}
        \sigma'_f (h) = \alpha \big(f(y)\big)\ \sigma_f\big(h\, \alpha^{-1}(y)\big)
    \end{equation}
    in $\hG_{f(y)}$. Therefore, using Relation \eqref{equiv_struc_as_crossed_morphism} between $\psi, \psi'$ and $\sigma, \sigma'$, as well as Property \eqref{lift_compatible_with_action} and the fact that $\tau_f$ is fibrewise a group automorphism, Equation \eqref{conjugate_equiv_struc} translates to
    \begin{eqnarray*}
        \psi'_f(y) & = & \tau_f^{-1}\big(\sigma'_f(\eps(y))\big) \\
        & = & \tau_f^{-1} \big( \, \alpha(f(y))\ \sigma_f(\alpha^{-1}(y))\, \big) \\
        & = & \tau_f^{-1} \big( \, \alpha(f(y))\ \sigma_f(\eps(y))\ \tau_f(\alpha^{-1}(y)) \, \big) \\
        & = & \tau_f^{-1}\big( \alpha(f(y)) \big)\ \tau_f^{-1}\big( \sigma_f(\eps(y)) \big) \ \alpha^{-1}(y) \\
        & = & \big( \alpha^f \cdot \psi_f \cdot \alpha^{-1} \big) (y)
    \end{eqnarray*}
    where $\alpha^f$ is defined as in \eqref{action_pi_on_sections}. This relation means that $\psi'$ and $\psi$ are related via the action of $\Gamma(Y,\hG)$ on $Z^1_{\tau}(\pi, \Gamma(Y,G)$ defined in \eqref{action_of_sections_on_crossed_morphisms}, which finishes the proof of the second part of Theorem \ref{ClassifTorsorTrivByCover}.
\end{proof}

If the group space $\sG$ is itself a $(\pi, G)$-group bundle in the sense of Definition \ref{pi_G_gp_bundle}, meaning that $\hG \simeq Y \times G$ as a group bundle, then we can formulate Theorem \ref{ClassifTorsorTrivByCover} slightly differently. Indeed, in that case there is a group isomorphism $\Gamma(Y, \hG) \simeq \Maps(Y, G)$, where the group structure on $\Maps(Y, G)$ is given by pointwise multiplication: for all $\mu_1, \mu_2 \in \Maps(Y,G)$ and all $y \in Y$, 
$$
(\mu_1 \cdot \mu_2)(y) := \mu_1(y) \mu_2(y).
$$
The group $\Maps(Y, \Aut(G))$ acts on $\Maps(Y, G)$ by group automorphisms: for all $\lambda \in \Maps(Y, \Aut(G))$, all $\mu \in \Maps(Y,G)$ and all $y \in Y$, the action is given by 
$$
(\lambda \ast \mu)(y) := \lambda(y)\big( \mu(y) \big).
$$
Note that this action satisfies a compatibility relation with the $\pi$-action on the groups $\Maps(Y, \Aut(G))$ and $\Maps(Y, G)$ given by pre-composition by a map $f \in \pi = \Aut_X(Y)$. Namely, for all $f \in \pi$, all $\lambda \in \Maps(Y,\Aut(G))$ and all $\mu \in \Maps(Y,G)$, we have 
$$
(\lambda \ast \mu) \circ f = (\lambda \circ f) \ast (\mu \circ f).
$$
Since $\sG$ is a $(\pi,G)$-group bundle, Theorem \ref{ClassifGpBdleTrivByCover} shows that the canonical $\pi$-equi\-variant structure of $\hG$ is entirely determined by a crossed morphism $\phi : \pi \to \Maps(Y, \Aut(G))$. Therefore, the $\pi$-action on $\Gamma(Y,\widehat{\sG})$ introduced in \eqref{action_pi_on_sections} can be recast in the following way here: for all $f \in \pi$ and all $\mu \in \Maps(Y,G)$, 
$$
\mu^f := \phi_f^{-1} \ast (\mu \circ f).
$$
We leave it as an exercise to check that this indeed defines an action of $\pi$ on $\Maps(Y,G)$. Then we have a notion of crossed morphism $\rho : \pi \to \Maps(Y, G)$ with respect to that action, which is a map satisfying 
$$
\rho_{f_1 \circ f_2} = \rho_{f_1}^{f_2} \cdot \rho_{f_2}.
$$
In what follows, we denote by $Z^1_\phi(\pi, \Maps(Y,G))$ the set of such crossed morphisms, which, under the assumption that $\hG \simeq Y \times G$, is in bijection with the set $Z^1_{\tau}(\pi, \Gamma(Y,\hG))$ introduced in \eqref{crossed_morphism_into_group_of_sections}. With this identification, the action of $\Gamma(Y, \hG)$ on $Z^1_{\tau}(\pi, \Gamma(Y,\hG))$ introduced in \eqref{action_of_sections_on_crossed_morphisms} becomes the action of the group $\Maps(Y,G)$ on the set of crossed morphisms $Z^1_\phi(\pi, \Maps(Y,G))$ defined, for all $\mu \in \Maps(Y, G)$, all $\rho \in Z^1_\phi(\pi, \Maps(Y,G))$ and all $f \in \pi$, by 
$$
(\mu \bullet \rho)_f := \mu^f \cdot \rho_f \cdot \mu^{-1}.
$$

\begin{exercise}
    Check that the map $(\mu \bullet \rho) : \pi \to \Maps(Y, G)$ defined above is indeed a crossed morphism and that $(\mu_1 \cdot \mu_2) \bullet \rho = \mu_1 \bullet (\mu_2 \bullet \rho)$, so that we indeed have a left action of the group $\Maps(Y, G)$ on the set $Z^1_\phi(\pi, \Maps(Y, G))$.
\end{exercise}

We then have the following corollary of Theorem \ref{ClassifTorsorTrivByCover}, which characterizes the so-called $(\pi,G)$-bundles (= those bundles over $\pi \backslash Y$ which are quotients $\pi \backslash P$ of $\pi$-equivariant principal $G$-bundles $P$ over $Y$ \textit{such that $P$ becomes trivial as a principal $G$-bundle when we forget its equivariant structure}, \cite{Grothendieck_Weil}).

\begin{corollary}\label{pi_G_ppal_bundles}
    In the setting of Theorem \ref{ClassifTorsorTrivByCover}, assume further that the group space $\sG$ is a $(\pi, G)$-group bundle in the sense of Definition \ref{pi_G_gp_bundle} and choose a crossed morphism $\phi : \pi \to \Maps(Y, \Aut(G))$ as in Theorem \ref{ClassifGpBdleTrivByCover}, so that there exists an isomorphism of group bundles 
    $$
    \sG \simeq \pi \backslash \big(Y \times G\big)
    $$ 
    where $f \in \pi$ acts to the left on $(y, g) \in Y \times G$ via
    \begin{equation}
        \tau_f (y,g) := \big(f(y), \varphi_f(y) (g)\big).
    \end{equation}
    Then the set of isomorphism classes of right $\sG$-torsors that become trivial as principal $G$-bundles after pullback to $Y$ is in bijection with the group cohomology set
    \begin{equation}\label{isoClassesOfpi_G_ppal_bdles}
    H^1_{\phi}\big(\pi, \Maps(Y,G)\big) := \Maps(Y,G) \ \big\backslash \ Z^1_{\phi}\big(\pi, \Maps(Y, G)\big)\ .
    \end{equation}
\end{corollary}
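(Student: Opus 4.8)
The plan is to deduce the corollary from Theorem \ref{ClassifTorsorTrivByCover} by transporting that general classification across the group isomorphism $\Gamma(Y, \hG) \simeq \Maps(Y, G)$ that the hypothesis $\hG \simeq Y \times G$ provides. The first thing I would check is that the two trivialisation conditions in play actually coincide: since under this hypothesis $\hG$ is the \emph{constant} group $Y \times G$, Exercise \ref{torsors_under_constant_gps} identifies $\hG$-torsors on $Y$ with principal $G$-bundles on $Y$, and under this identification the trivial $\hG$-torsor $\hG$ corresponds to the trivial principal $G$-bundle. Hence a $\sG$-torsor $\sP$ has $\hP := q^*\sP$ trivial as a $\hG$-torsor precisely when $\hP$ is a trivial principal $G$-bundle. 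So the $\sG$-torsors classified by Theorem \ref{ClassifTorsorTrivByCover} are exactly those described in the statement of the corollary, and it remains only to rewrite the target cohomology set $H^1_\tau(\pi, \Gamma(Y,\hG))$ as $H^1_\phi(\pi, \Maps(Y,G))$.

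The heart of the argument is to verify that the group isomorphism $\alpha \leftrightarrow \mu$, where $\alpha(y) = (y, \mu(y))$ under $\hG \simeq Y \times G$, intertwines all the structures built into $H^1_\tau$ with those built into $H^1_\phi$. The key point, which I expect to be the main (if mild) obstacle, is that the $\pi$-action \eqref{action_pi_on_sections} becomes the asserted action $\mu^f = \phi_f^{-1} \ast (\mu \circ f)$. This is the one place where the explicit form \eqref{twisted_action_of_pi_on_Y_times_G} of the canonical equivariant structure $\tau$ enters: from $\tau_f(y,g) = \big(f(y), \varphi_f(y)(g)\big)$ one reads off $\tau_f^{-1}\big(f(y), g'\big) = \big(y, \varphi_f(y)^{-1}(g')\big)$, so unwinding $\alpha^f := \tau_f^{-1} \circ (\alpha \circ f)$ gives $\mu^f(y) = \varphi_f(y)^{-1}\big(\mu(f(y))\big)$, which is exactly $\phi_f^{-1} \ast (\mu \circ f)$ evaluated at $y$, since $\phi = \varphi$. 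Because both groups carry pointwise multiplication, the isomorphism $\alpha \leftrightarrow \mu$ is automatically a group isomorphism, so the only substantive check is this compatibility of the $\pi$-actions.

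Granting this compatibility, the rest is formal. The crossed-morphism condition \eqref{crossed_morphism_into_group_of_sections} for $\psi : \pi \to \Gamma(Y,\hG)$ transports term by term to the crossed-morphism condition for the corresponding $\rho : \pi \to \Maps(Y,G)$, since both the pointwise products and the $\pi$-actions correspond; this yields a bijection $Z^1_\tau(\pi, \Gamma(Y,\hG)) \simeq Z^1_\phi(\pi, \Maps(Y,G))$. Likewise, the conjugation action \eqref{action_of_sections_on_crossed_morphisms} is assembled from the same pointwise product and the same $\pi$-action, so it corresponds exactly to the action $(\mu \bullet \rho)_f = \mu^f \cdot \rho_f \cdot \mu^{-1}$ of $\Maps(Y,G)$. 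Passing to quotients produces a bijection $H^1_\tau(\pi, \Gamma(Y,\hG)) \simeq H^1_\phi(\pi, \Maps(Y,G))$, and composing this with the bijection of Theorem \ref{ClassifTorsorTrivByCover} between $H^1_\tau(\pi,\Gamma(Y,\hG))$ and isomorphism classes of $\sG$-torsors that are trivial after pullback to $Y$ gives precisely the bijection with \eqref{isoClassesOfpi_G_ppal_bdles} claimed in the corollary.
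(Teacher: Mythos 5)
Your proposal is correct and follows essentially the same route as the paper: the text preceding the corollary establishes exactly the transport of $H^1_\tau(\pi,\Gamma(Y,\hG))$ to $H^1_\phi(\pi,\Maps(Y,G))$ via the isomorphism $\Gamma(Y,\hG)\simeq\Maps(Y,G)$, including the recasting of the $\pi$-action \eqref{action_pi_on_sections} as $\mu^f=\phi_f^{-1}\ast(\mu\circ f)$, and then invokes Theorem \ref{ClassifTorsorTrivByCover}. Your additional observation that triviality as a $\hG$-torsor coincides with triviality as a principal $G$-bundle (via Exercise \ref{torsors_under_constant_gps}) is a welcome explicit justification of a point the paper leaves implicit.
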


A notable special case of Corollary \ref{pi_G_ppal_bundles} is when $X$ is a complex analytic manifold whose universal cover is a contractible Stein manifold (for instance, when $X$ is a compact connected Riemann surface of genus $g_X \geq 1$, or a ball quotient in higher dimension). Then, as we have seen in Section \ref{GpBdleTrivByCover}, every group bundle on $X$ is a $(\piX,G)$-group bundle for some group $G$, so every $\sG$-torsor $\sP$ is a $(\piX,G)$-bundle. Indeed, the pullback of $\sP$ to $\Xt$ is topologically trivial because $\Xt$ is contractible, hence also holomorphically trivial because $\Xt$ is Stein, by a theorem of Grauert \cite{Grauert_hol_bundles}.

\subsection{Frame bundles}

Let $E$ be a fibre bundle on a complex analytic manifold $X$. This means that $E$ is a complex analytic manifold equipped with a \emph{locally trivial} morphism $p : E \to X$. As in Example \ref{groupBundleExampleAutomorphismFibreBundle}, we denote by $\mathcal{A}ut(E)$ the \textit{bundle} of automorphisms of $E$. Indeed, as $E$ is locally trivial as a space over $X$, so is $\mathcal{A}ut(E)$. It is therefore a group bundle on $X$. We now wish to characterise $\mathcal{A}ut(E)$-torsors in terms of the fibre bundle $E$.

\smallskip 

The basic construction goes as follows. Given a right $\mathcal{A}ut(E)$-torsor $\sP$, the group bundle $\mathcal{A}ut(E)$ acts to the left on the product bundle $\sP \times_X E$ via
$$
u \cdot (p, v) := \big(p \cdot u^{-1}, u(v)\big)
$$
where $u \in \mathcal{A}ut(E)_x = \Aut(E_x)$, $p \in \sP_x$ and $v \in E_x$ for some $x \in X$. As $\mathcal{A}ut(E)$ is locally trivial as a space over $X$, so is the right $\mathcal{A}ut(E)$-torsor $\sP$ (Definition \ref{torsor_def}). The bundle $\mathcal{A}ut(E)$ acts (fibrewise) freely on $\sP$, hence also on the locally trivial space $\sP \times_X E$, the associated quotient space
$$
\sP[E] := \mathcal{A}ut(E) \backslash (\sP \times_X E)
$$
is locally trivial as a space over $X$. In particular, $\sP[E]$ is a complex analytic manifold. Moreover, since $\sP$ is locally isomorphic to $\mathcal{A}ut(E)$ as a space over $X$, the bundle $\sP[E]$ is locally isomorphic to $E$ as a space over $X$. This construction induces a functor
\begin{equation}\label{assoc_bdle_to_Aut_E_torsor}
    \begin{array}{rcl}
        \big\{ \mathcal{A}ut(E)\text{-torsors} \big\} & \lra & \big\{ \text{fibre bundles locally isomorphic to } E \big\} \\
        \sP & \lmt & \sP[E]
    \end{array}
\end{equation}
which will end up being an equivalence of categories. Before we proceed with the proof of this, let us observe that if $E = X \times \C^n$, then $\mathcal{A}ut(E) = X \times \GL(n,\C)$, so the $\mathcal{A}ut(E)$-torsors are precisely the principal $\GL(n,\C)$-bundles on $X$ (Exercise \ref{torsors_under_constant_gps}) and the fibre bundles locally isomorphic to $X \times \C^n$ are precisely the rank $n$ vector bundles on $X$. Based on this remark, the quasi-inverse to the functor \eqref{assoc_bdle_to_Aut_E_torsor} should be the functor sending a fibre bundle $E'$ locally isomorphic to $E$ to its so-called \emph{frame bundle} $\Fr(E')$. By definition, a \emph{frame} of the fibre $E'_x$ (at a point $x \in X$) is an isomorphism $E_x \to E'_x$ and the frame bundle of $E'$ is defined, as a set, by
\begin{equation}\label{frame_bundle_def}
    \Fr(E') := \bigsqcup_{x \in X} \Isom(E_x, E'_x).
\end{equation}
Note that the fibres of $\Fr(E')$ are non-empty \emph{because} of the assumption that $E'$ is locally isomorphic to $E$. As a matter of fact, this assumption also shows that $\Fr(E')$ can be endowed with a structure of complex analytic manifold and equipped with a locally trivial morphism $\Fr(E') \to X$. Since $\mathcal{A}ut(E) = \bigsqcup_{x\in X} \Aut(E_x)$, the group bundle $\mathcal{A}ut(E)$ acts to the right on $\Fr(E')$ via $f\cdot u := f \circ u$, for all $f\in \Fr(E'_x) = \Isom(E_x,E'_x)$ and all $u \in \Aut(E_x)$. Evidently, this action is fibrewise free and transitive, making $\Fr(E')$ an $\mathcal{A}ut(E)$-torsor.

\smallskip

It remains to check that the functors $\sP \mapsto \sP[E]$ and $E' \mapsto \Fr(E')$ are quasi-inverse to each other. Let us first assume that $\sP = \Fr(E')$ and compute $\Fr(E')[E]$: we see that the bundle map $\Fr(E') \times_X E \to E'$ sending a pair $(f,v) \in \Fr(E'_x) \times E_x$ to $f(v) \in E'_x$ induces an isomorphism $\Fr(E')[E] \simeq E'$. Conversely, let us assume that $E' = \sP[E]$ and show that $\Fr(\sP[E]) \simeq \sP$. This is more complicated, as it requires noting, first, that a fibre bundle $E'$ which is locally isomorphic to $E$ admits the group space $\mathcal{A}ut(E)$ as structure group, which means that it can be represented, as in Theorem \ref{isomorphismH1andIsoClassesOfTorsors}, by an $\mathcal{A}ut(E)$-valued $1$-cocycle $(g_{ij} \in \sO_{X}(\mathcal{A}ut(E))(U_i \cap U_j))_{(i,j)}$. But then $\Fr(E')$ can be represented by the same $1$-cocycle, using it to glue $\mathcal{A}ut(E)|_{U_i}$ to $\mathcal{A}ut(E)|_{U_j}$ instead of $E'|_{U_i}$ to $E'|_{U_j}$. Next, let us observe that, likewise, if $E' = \sP[E]$, then the fibre bundle $E'$ is representable by the same $\mathcal{A}ut(E)$-valued $1$-cocycle as $\sP$. So, taking $E' = \sP[E]$ and applying the first remark, we see that $\Fr(\sP[E])$ is representable by the same $1$-cocycle as $\sP$, so indeed $\Fr(\sP[E]) \simeq \sP$, by Theorem \ref{isomorphismH1andIsoClassesOfTorsors}. We have thus proved the following result.

\begin{theorem}\label{Aut_E_torsors}
Let $E$ be a fibre bundle over a complex analytic manifold $X$. Then there is an equivalence of categories between fibre bundles that are locally isomorphic to $E$ and torsors under the group bundle $\mathcal{A}ut(E)$. More precisely, the functor 
$$
    \begin{array}{rcl}
        \big\{ \text{fibre bundles locally isomorphic to } E \big\} & \lmt & \big\{ \mathcal{A}ut(E)\text{-torsors} \big\} \\
        E' & \lmt & \Fr(E')
    \end{array}
$$
taking such a fibre bundle $E'$ to its frame bundle is an equivalence of categories, admitting the functor $\sP \mapsto \sP[E]$ defined in \eqref{assoc_bdle_to_Aut_E_torsor} as a quasi-inverse. 
\end{theorem}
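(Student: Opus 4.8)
The plan is to exhibit the two constructions $E' \mapsto \Fr(E')$ and $\sP \mapsto \sP[E]$ as functors and then produce natural isomorphisms $\Fr(\sP[E]) \simeq \sP$ and $\Fr(E')[E] \simeq E'$, witnessing that each is a quasi-inverse of the other. The first thing I would settle is functoriality on morphisms. A morphism $E' \to E''$ of fibre bundles locally isomorphic to $E$ induces, fibrewise, a map $\Isom(E_x, E'_x) \to \Isom(E_x, E''_x)$ by post-composition, yielding an $\mathcal{A}ut(E)$-equivariant bundle map $\Fr(E') \to \Fr(E'')$; conversely a morphism $\sP \to \sQ$ of torsors induces a map $\sP \times_X E \to \sQ \times_X E$ that is $\mathcal{A}ut(E)$-invariant and hence descends to $\sP[E] \to \sQ[E]$. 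Since every morphism of $\mathcal{A}ut(E)$-torsors is invertible (Proposition \ref{morphisms_of_torsors_are_invertible}), both categories are groupoids, so it suffices to establish the two natural isomorphisms above.

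For the composite $\Fr(E')[E] \simeq E'$, I would use the canonical evaluation map. The bundle map $\Fr(E') \times_X E \to E'$ sending $(f, v) \in \Isom(E_x, E'_x) \times E_x$ to $f(v) \in E'_x$ is invariant under the $\mathcal{A}ut(E)$-action $u \cdot (f, v) = (f \circ u^{-1}, u(v))$, since $(f \circ u^{-1})\big(u(v)\big) = f(v)$. It therefore descends to a bundle map $\Fr(E')[E] \to E'$, which I would check is a fibrewise bijection: surjective because every point of $E'_x$ is hit by some frame, injective because two representatives with the same image differ precisely by an element of $\Aut(E_x)$. Naturality in $E'$ is then immediate from the definitions.

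For the composite $\Fr(\sP[E]) \simeq \sP$, the cleanest route is via cocycles and the classification of Theorem \ref{isomorphismH1andIsoClassesOfTorsors}. A fibre bundle locally isomorphic to $E$ admits $\mathcal{A}ut(E)$ as structure group, so it is represented by an $\mathcal{A}ut(E)$-valued $1$-cocycle $(g_{ij})$ with $g_{ij} \in \sO_X(\mathcal{A}ut(E))(U_i \cap U_j)$, and the same cohomology set $\check{H}^1(X, \sO_X(\mathcal{A}ut(E)))$ classifies $\mathcal{A}ut(E)$-torsors. I would observe that under this dictionary both $\Fr$ and $[E]$ preserve the representing cocycle: if $\sP$ is represented by $(g_{ij})$, then $\sP[E]$ is obtained by gluing the local models $E|_{U_i}$ along the same $(g_{ij})$, and passing to the frame bundle glues copies of $\mathcal{A}ut(E)$ along $(g_{ij})$ once more. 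Hence $\Fr(\sP[E])$ and $\sP$ share a cocycle and are isomorphic by Theorem \ref{isomorphismH1andIsoClassesOfTorsors}, the isomorphism being natural by Proposition \ref{isomorphicTorsorsAreCohomologous}.

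The step I expect to be the main obstacle is justifying that a fibre bundle $E'$ locally isomorphic to $E$ genuinely admits $\mathcal{A}ut(E)$ as a structure group in the precise sense required by Theorem \ref{isomorphismH1andIsoClassesOfTorsors}: namely that its changes of local trivialisation over $U_i \cap U_j$ are faithfully recorded by holomorphic sections of $\mathcal{A}ut(E)$, that is, by fibrewise complex-analytic automorphisms of $E$. Once this identification of $\sO_X(\mathcal{A}ut(E))(U_i \cap U_j)$ with the group of admissible transition functions is pinned down, matching the cocycles of $E'$, $\Fr(E')$, $\sP$ and $\sP[E]$ is routine, and the equivalence of categories follows.
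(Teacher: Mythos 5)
Your proposal is correct and follows essentially the same route as the paper: the evaluation map $(f,v) \mapsto f(v)$ for the isomorphism $\Fr(E')[E] \simeq E'$, and the comparison of $\mathcal{A}ut(E)$-valued $1$-cocycles together with Theorem \ref{isomorphismH1andIsoClassesOfTorsors} for $\Fr(\sP[E]) \simeq \sP$. The explicit treatment of functoriality on morphisms and the flagged issue of $E'$ admitting $\mathcal{A}ut(E)$ as structure group are points the paper passes over more quickly, but they do not change the argument.
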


As a matter of fact, there is a more general version of Theorem \ref{Aut_E_torsors}, characterizing fibre bundles $E'$ locally isomorphic to a fixed fibre bundle $E$ admitting a fixed group bundle $\sG \hookrightarrow \mathcal{A}ut(E)$ as structure group, meaning that $E$ can be represented by a $\sG$-valued $1$-cocycle, where $\sG$ is a group space acting \textit{faithfully} on $E$, in which case $\sG$ can be identified with a subgroup of the group space $\mathcal{A}ut(E)$. The frame bundle of $E'$ is constructed as in \eqref{frame_bundle_def}, with isomorphisms representable by local sections of $\sG$. So this time $\Fr(E')$ is a $\sG$-torsor and it can be represented by the same $\sG$-valued $1$-cocycle as $E'$. Now, given a $\sG$-torsor $\sP$, we can again form the fibre bundle $\sP[E]$, because $\sG$ acts on $E$ by assumption. In full generality, to obtain a $1$-cocycle $h_{ij}$ for $\sP[E]$, it suffices to compose a $\sG$-valued $1$-cocycle $g_{ij}$ of $\sP$ by the group space morphism $\rho : \sG \to \mathcal{A}ut(E)$ defining the action of $\sG$ on $E$. Then $h_{ij} := \rho \circ g_{ij}$ is in fact a $\rho(\sG)$-valued $1$-cocycle. But if the action of $\sG$ on $E$ is faithful, meaning that $\rho$ is injective, we can recover $g_{ij}$ from $h_{ij}$. Hence an equivalence of categories between fibre bundles $E'$ locally isomorphic to $E$ and admitting the group space $\sG$ as structure group and $\sG$-torsors. This gives a nice geometric interpretation of $\sG$-torsors under any given group space admitting a \emph{faithful} representation as structure group \emph{of a fibre bundle} $E$: they can be viewed as fibre bundles locally isomorphic to $E$ and admitting a reduction of structure group to $\sG \subset \mathcal{A}ut(E)$.

\begin{exercise}\label{Ad_E_torsors}
    Let $\sG$ be a group bundle on a complex analytic manifold $X$ and let $\sP$ be a (right) $\sG$-torsor. Show, by generalizing Example \ref{Adjoint_of_ppal_bundle}, that the group bundle $\mathcal{A}ut(\sP)$ is isomorphic to $\Ad(\sP) := \sP[\sG]$, where $\sG$ acts to the left on $\sP \times_X \sG$ via $g \cdot (p, h) := (p\cdot g^{-1}, ghg^{-1})$. Use Theorem \ref{Aut_E_torsors} to conclude that the category of $\Ad(\sP)$-torsors is equivalent to the category of fibre bundles $\sQ \to X$ that are locally isomorphic to $\sP$ as fibre bundles over $X$.
\end{exercise}

As an application of the notion of frame bundle, we will now show that vector bundles with fixed determinant correspond to torsors under a certain group bundle.

\begin{example}
    Let $E$ be a vector bundle of rank $r$ on $X$ and let $\SL(E)$ be the group bundle consisting of automorphisms of $E$ that induce the identity on the line bundle $\det(E)$ (this group bundle was constructed in Example \ref{exampleSubgroupSL}). We claim that the map sending an $\SL(E)$-torsor $\sP$ to the vector bundle $$\sP[E] := \SL(E) \backslash (\sP \times_X E)\,,$$ defined similarly to \eqref{assoc_bdle_to_Aut_E_torsor}, induces an equivalence of categories between $\SL(E)$-torsors and vector bundles of rank $r$ and determinant isomorphic to $\det(E)$. Indeed, since $\SL(E)$ is a subgroup of the group bundle $\GL(E)$, the fibre bundle $\sP[E]$ is a vector bundle. And since $\sP[E]$ is representable by a cocycle of transition maps which are local sections of $\SL(E)$, its determinant is isomorphic to $\det(E)$. More precisely, to the $\SL(E)$-torsor $\sP$ there is associated a pair $(\sP[E], \lambda)$, where $\sP[E]$ is the vector bundle defined above and $\lambda: \det(E) \to \det(\sP[E])$ is an isomorphism of line bundles. Conversely, if we fix a vector bundle $E'$ locally isomorphic to $E$ (in particular, $E'$ is of rank $r$) and an isomorphism of line bundles $\lambda : \det(E) \to \det(E')$, then, similarly to \eqref{frame_bundle_def},  we can construct the frame bundle
    $$\Fr(E') := \bigsqcup_{x \in X} \Isom_\lambda(E_x, E'_x)$$ where $\Isom_\lambda(E_x, E'_x)$ consists of isomorphisms of vector spaces $u : E_x \to E'_x$ such that $\Lambda^r u = \lambda_x$. As before, the frame bundle $\Fr(E')$ is an $\SL(E)$-torsor and we have $\Fr(\sP[E]) \simeq \sP$ and $\sP(\Fr(E')) \simeq E'$. This concludes the example and we observe that if we want to see vector bundles of rank $r$ and determinant $L$ (a fixed line bundle on $X$) as $\SL(E)$-torsors, it suffices to construct a vector bundle $E$ of rank $r$ and determinant $L$, for instance $E := \C_X^{r-1} \oplus L$. When $L \simeq \C_X$ is a trivial line bundle, we have $\SL(E) \simeq \SL(\C_X^r) \simeq X \times \SL(r,\C)$ and we find again the well-known correspondence between vector bundles of rank $r$ with trivial determinant on the one hand and $\SL(r,\C)$-principal bundles on the other hand.
\end{example}

\section{Flat torsors}\label{twistedLocSyst}

\subsection{Group coverings}

In this subsection we study group bundles with discrete fibres and give a classification result of these in the spirit of Theorem \ref{ClassifGpBdleTrivByCover}.

\begin{definition}
    Let $X$ be a topological space. A group bundle with discrete fibres on $X$ will be called a \emph{group covering}.
\end{definition}

Now we will show that, if the topological space $X$ admits a universal covering, then every group covering $\sG$ on $X$ is a $(\piX, \Gamma)$-bundle for some \emph{discrete} group $\Gamma$. The main difference with Theorem \ref{ClassifGpBdleTrivByCover} is that the crossed morphism $\phi : \pi \to \Maps(Y,\Aut(\Gamma))$ is replaced by a group morphism $\phi : \piX \to \Aut(\Gamma)$ (see Remark \ref{grpBundlesWithDiscreteFibres}).

\begin{theorem}\label{ClassifGpCovering}
    Let $X$ be a connected topological space admitting a universal covering space $\Xt$ and let $p : \zeta \to X$ be a group bundle with discrete fibres. Then there exists a discrete group $\Gamma$, a group morphism $$\varphi : \piX \lra \Aut(\Gamma)$$ and an isomorphism of group coverings $$\zeta \simeq \piX \backslash \big(\Xt \times \Gamma\big)$$ where $f \in \piX$ acts to the left on $(\xi, \gamma) \in \Xt \times \Gamma$ via
    \[
        \tau_f \cdot (\xi,\gamma) := \big(f(\xi), \varphi_f (\gamma)\big).
    \]
    Moreover, if $\phi$ and $\phi'$ are group morphisms from $\piX$ to $\Aut(\Gamma)$, then the associated group coverings $\sG$ and $\sG'$ are isomorphic over $X$ if and only if $\phi$ and $\phi'$ are conjugate under the action of $\Aut(\Gamma)$ on $\Hom(\piX, \Aut(\Gamma))$ defined, for all $\lambda \in \Aut(\Gamma)$ and all $f \in \piX$, by
    $$
    (\lambda \bullet \phi)_f := \lambda \circ \phi_f \circ \lambda^{-1}
    $$

So, the set of isomorphism classes of $(\piX,\Gamma)$-group coverings of $X$ is in bijection with the group cohomology set 
$$
H^1\big(\piX, \Aut(\Gamma) \big) := 
\Aut(\Gamma) \ \big\backslash \ \Hom \big(\piX, \Aut(\Gamma) \big)\ .
$$
\end{theorem}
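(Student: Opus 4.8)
The plan is to deduce this statement from the topological analogue of Theorem \ref{ClassifGpBdleTrivByCover}, applied to the Galois cover given by the universal covering $q : \Xt \to X$ (whose automorphism group is $\pi = \piX$), and then to collapse the resulting crossed morphism to a genuine homomorphism using Remark \ref{grpBundlesWithDiscreteFibres}. Since the proof of Theorem \ref{ClassifGpBdleTrivByCover} relies only on the universal property of the pullback and the bookkeeping of the induced $\pi$-equivariant structure, it transcribes verbatim to the category of topological spaces over $X$; I would simply invoke that topological version throughout.

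The first step is to verify that every group covering is a $(\piX, \Gamma)$-group bundle for some discrete group $\Gamma$, i.e. that $q^*\zeta$ is trivial. As $p : \zeta \to X$ is locally trivial with discrete fibres, it is a covering map, and hence so is its pullback $q^*\zeta \to \Xt$. Because $\Xt$ is the universal covering of $X$, it is connected and simply connected (and inherits local path-connectedness and semi-local simple connectedness from $X$), so every covering of $\Xt$ is trivial: fixing a base point and letting $\Gamma$ be the fibre of $q^*\zeta$ above it, we obtain $q^*\zeta \simeq \Xt \times \Gamma$ as coverings. This trivialisation respects the fibrewise group structure, since the multiplication, inversion and unit of $q^*\zeta$ are continuous maps with discrete target and $\Xt$ is connected, hence locally constant and therefore constant along $\Xt$; thus it is an isomorphism of group bundles, and $\zeta$ is a $(\piX,\Gamma)$-group bundle in the sense of Definition \ref{pi_G_gp_bundle}. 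This is the only genuinely non-formal input and the step where the hypothesis that $X$ admits a universal covering is essential.

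Applying the topological version of Theorem \ref{ClassifGpBdleTrivByCover} now produces a crossed morphism $\phi : \piX \to \Maps(\Xt, \Aut(\Gamma))$ and an isomorphism $\zeta \simeq \piX \backslash (\Xt \times \Gamma)$ with the twisted action of \eqref{twisted_action_of_pi_on_Y_times_G}. At this point I would invoke discreteness exactly as in Remark \ref{grpBundlesWithDiscreteFibres}: since $\Aut(\Gamma)$ is discrete and $\Xt$ is connected, every map $\Xt \to \Aut(\Gamma)$ is constant, so $\Maps(\Xt, \Aut(\Gamma)) \simeq \Aut(\Gamma)$ as groups. Under this identification the right $\pi$-action $\lambda^f = \lambda \circ f$ of \eqref{action_of_pi_on_Aut_G} becomes trivial, because a constant map is unchanged by precomposition with $f$, so the crossed-morphism relation \eqref{crossed_morphism_eq} degenerates to $\phi_{f_1 \circ f_2} = \phi_{f_1} \circ \phi_{f_2}$; that is, $\phi$ is an honest group homomorphism $\piX \to \Aut(\Gamma)$, and \eqref{twisted_action_of_pi_on_Y_times_G} reduces to $\tau_f(\xi, \gamma) = (f(\xi), \varphi_f(\gamma))$, which is precisely the action in the statement. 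This establishes the first assertion.

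For the uniqueness part I would feed the same identifications into part (2) of Theorem \ref{ClassifGpBdleTrivByCover}. The action of $\Maps(\Xt, \Aut(\Gamma))$ on crossed morphisms from \eqref{action_on_crossed_morphisms}, namely $(\lambda \bullet \phi)_f(y) = \lambda(f(y)) \circ \phi_f(y) \circ \lambda(y)^{-1}$, specialises for constant $\lambda \in \Aut(\Gamma)$ and constant $\phi_f$ to $\lambda \circ \phi_f \circ \lambda^{-1}$, which is exactly the conjugation action of $\Aut(\Gamma)$ on $\Hom(\piX, \Aut(\Gamma))$ appearing in the statement. Hence two group coverings are isomorphic over $X$ if and only if the associated homomorphisms are conjugate, and the cohomology set $H^1(\piX, \Maps(\Xt, \Aut(\Gamma)))$ collapses to $H^1(\piX, \Aut(\Gamma)) = \Aut(\Gamma) \backslash \Hom(\piX, \Aut(\Gamma))$. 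Everything after the triviality of the pulled-back covering is thus a matter of transporting the discreteness of $\Gamma$ through the constructions of Theorem \ref{ClassifGpBdleTrivByCover}.
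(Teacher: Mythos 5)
Your proposal is correct and follows essentially the same route as the paper: the paper's proof likewise pulls $\zeta$ back to $\Xt$, uses simple connectedness to trivialise the covering, and then runs the argument of Theorem \ref{ClassifGpBdleTrivByCover} in the topological setting, observing (as in Remark \ref{grpBundlesWithDiscreteFibres}) that discreteness of $\Gamma$ and connectedness of $\Xt$ force both the maps $\Phi_f$ and the conjugating automorphism $\Lambda$ to be constant in $\xi$, so the crossed morphism collapses to a homomorphism $\piX \to \Aut(\Gamma)$ and the action collapses to conjugation by $\Aut(\Gamma)$. Your explicit check that the trivialisation $q^*\zeta \simeq \Xt \times \Gamma$ respects the fibrewise group structure is a welcome detail that the paper leaves implicit.
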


\begin{proof}
    The proof runs parallel to that of Theorem \ref{ClassifGpBdleTrivByCover}. Let $q : \Xt \to X$ be the universal covering of $X$ and let $q^*\zeta$ be the pullback of $\zeta$ to $\Xt$. Since $\zeta$ is locally trivial with discrete fibres, so is $q^* \zeta$. And since $\Xt$ is simply connected, the group covering $q^* \zeta$ is trivial, meaning that there exists a discrete group $\Gamma$ and an isomorphism $q^* \zeta \simeq \Xt \times \Gamma$. So the only remaining task is to classify all $\piX$-equivariant structures on the product group covering $\Xt \times \Gamma$. As in \eqref{equiv_structure_on_product_bundle}, such an equivariant structure is given by continuous maps $\Phi_f : \Xt \times \Gamma \to \Gamma$, where $f \in \piX$, satisfying $\Phi_f (\xi,\gamma_1 \gamma_2) = \Phi_f (\xi,\gamma_1) \Phi_f (\xi,\gamma_2)$ for all $\xi \in \Xt$ and all $\gamma_1, \gamma_2$ in $\Gamma$. Since each $\Phi_f : \Xt \times \Gamma \to \Gamma$ is continuous and $\Gamma$ is discrete, the fact that $\Xt$ is connected implies that $\Phi_f$ is in fact independent of $\xi$. Hence a map $\phi_f \in \Aut(\Gamma)$. Since these maps come from a $\piX$-equivariant structure on $\Xt \times G$, they satisfy $\phi_{f_1 \circ f_2} = \phi_{f_1} \circ \phi_{f_2}$, meaning that $\phi : \piX \to \Aut(\Gamma)$ is indeed a group morphism, and this concludes the proof of the first part of the theorem. The proof of the second part is similar to that in Theorem \ref{ClassifGpBdleTrivByCover}, with $\lambda \in \Maps(\Xt, \Aut(\Gamma))$ being replaced by $\lambda \in \Aut(\Gamma)$ because a group bundle automorphism $\alpha : \Xt \times \Gamma \to \Xt \times \Gamma$ is of the form $(y , \gamma) \mapsto (y, \Lambda(y, \gamma))$ with $\Lambda : \Xt \times \Gamma \to \Gamma$ continuous. Since $\Gamma$ is discrete and $\Xt$ is connected, the map $\Lambda$ is in fact independent of $\xi \in \Xt$, which induces a group automorphism $\lambda : \Gamma \to \Gamma$.
\end{proof}

Note that, as in Theorem \ref{ClassifGpBdleTrivByCover}, we can classify similarly group coverings that are trivialised by a fixed Galois cover $q : Y \to X$ with automorphism group $\pi$ (it does not have to be the universal cover). Namely, given a discrete group $\Gamma$, the $(\pi,\Gamma)$-group coverings of $X$ are classified by $H^1(\pi, \Aut(\Gamma))$, with $\pi$ acting trivially on $\Aut(\Gamma)$.

\subsection{Torsors under group coverings}

Let us classify torsors under a group covering $\zeta$ of $X$. This is similar to Corollary \ref{pi_G_ppal_bundles}, where we classified torsors under a $(\pi, G)$-group bundle, except that we are now assuming that $\pi = \piX$ and that $G$ is a discrete group $\Gamma$. The main difference with Corollary \ref{pi_G_ppal_bundles} is that the crossed morphism $\rho \in Z^1_{\phi}(\pi, \Maps(Y, G))$ in Corollary \ref{pi_G_ppal_bundles} is now replaced by a crossed morphism $\rho \in Z^1_{\phi}(\piX, G)$. Because of what we saw in Theorem \ref{ClassifGpCovering}, namely that the map $\phi : \piX \to \Aut(\Gamma)$ is a group morphism, saying that $\rho : \piX \to \Gamma$ is a crossed morphism means that, for all $f_1, f_2 \in \piX$, it satisfies
\begin{equation}\label{crossed_morphism_expl}
    \rho\big(f_1 \circ f_2\big) = \rho\big(f_1\big)\ \phi_{f_1}\big(\rho(f_2)\big)    
\end{equation}
where we denote by $\phi_f \in \Aut(G)$ the group automorphism of $G$ associated to $f \in \pi$. We then obtain the following classification result for torsors under a group covering $\zeta$.

\begin{theorem}\label{pi_Gamma_ppal_coverings}
    Let $X$ be a connected topological space admitting a universal covering space $\Xt$ and let $\zeta$ be a group covering on $X$. Denote by $\Gamma$ the typical fibre of $\zeta$ and choose a group morphism $\phi : \piX \to \Aut(\Gamma)$ which represents $\zeta$ in the sense of Theorem \ref{ClassifGpCovering}, i.e.\ such that
    $$
    \zeta \simeq \piX\, \big\backslash \, (\Xt \times \Gamma).
    $$
    Let $\eta$ be a $\zeta$-torsor. Then there exists a crossed morphism $\rho : \piX \to \Gamma$ in the sense of Equation \eqref{crossed_morphism_expl} and an isomorphism of $\zeta$-torsors
    $$
    \eta \simeq \piX \, \big\backslash \, \big(\Xt \times \Gamma\big)
    $$
    where $f \in \piX$ acts to the left on $(\xi, \gamma) \in \Xt \times \Gamma$ via
    \begin{equation}\label{action_via_crossed_morphism}
        \sigma_f (\xi, \gamma) := \big( f(\xi), \rho(f) \phi_f(\gamma) \big)
    \end{equation}    
    Moreover, if $\rho$ and $\rho'$ are crossed morphisms from $\piX$ to $\Gamma$, then the associated $\sG$-torsors $\eta$ and $\eta'$ are isomorphic over $X$ if and only if $\rho$ and $\rho'$ are conjugate under the action of $\Gamma$ on $Z^1_\phi(\piX, \Gamma)$ defined, for all $\gamma \in \Gamma$ and all $f \in \piX$, by
    \begin{equation}\label{Gamma_action_on_crossed_morphisms}
    (\gamma \bullet \rho)(f) := \gamma \, \rho(f) \, \phi_f (\gamma^{-1})\ .
    \end{equation}
    So the set of isomorphism classes of $\zeta$-torsors on $X$ is in bijection with the group cohomology set 
    \begin{equation}\label{isomClassesOfpi_Gamma_ppal_coverings}
        H^1_\phi\big(\piX, \Gamma \big) := 
    \Gamma \ \big\backslash \ Z^1_\phi \big(\piX, \Gamma \big)\ .
    \end{equation}
\end{theorem}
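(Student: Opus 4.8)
The plan is to follow the blueprint of Theorem \ref{ClassifTorsorTrivByCover} and its specialisation Corollary \ref{pi_G_ppal_bundles}, taking $Y = \Xt$, $\pi = \piX$ and $G = \Gamma$, the only genuinely new ingredient being that $\Gamma$ is \emph{discrete}. First I would pull back $\eta$ along the universal covering $q : \Xt \to X$. The pullback $\heta := q^*\eta$ is a torsor under $\hzeta := q^*\zeta$ and, by the universal property of the fibre product (exactly as in \eqref{action_lift_torsors}), it carries a canonical $\piX$-equivariant structure $\sigma$ with $\eta \simeq \piX \backslash \heta$ over $X \simeq \piX \backslash \Xt$. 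By Theorem \ref{ClassifGpCovering} I may fix an identification $\hzeta \simeq \Xt \times \Gamma$ under which the canonical $\piX$-equivariant structure $\tau$ of $\hzeta$ is the one given by the group morphism $\phi : \piX \to \Aut(\Gamma)$.

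The step that is genuinely new relative to Corollary \ref{pi_G_ppal_bundles}---where triviality of the pullback was a standing hypothesis---is to show that \emph{every} $\zeta$-torsor becomes trivial after pullback to $\Xt$. Here I would exploit the discreteness of the fibres: a $\zeta$-torsor is fibrewise a $\Gamma$-torsor and is locally trivial, so its projection to $X$ is a covering map with discrete nonempty fibres, and therefore $\heta \to \Xt$ is a covering of $\Xt$. Since $\Xt$ is simply connected, this covering is trivial (exactly as $q^*\zeta$ was trivialised in the proof of Theorem \ref{ClassifGpCovering}); in particular $\heta$ admits a global section, so Corollary \ref{globalSectionImpliesTriviality} yields $\heta \simeq \hzeta$ as $\hzeta$-torsors. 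Theorem \ref{ClassifTorsorTrivByCover} then applies and presents $\sigma$ by a crossed morphism $\psi : \piX \to \Gamma(\Xt, \hzeta)$ through $\sigma_f(g) = \tau_f(\psi_f(y)\,g)$, as in \eqref{twisted_action_of_pi_on_hG}.

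The remaining point of the first part is the discreteness reduction. Under $\hzeta \simeq \Xt \times \Gamma$ one has $\Gamma(\Xt, \hzeta) \simeq \Maps(\Xt, \Gamma)$, the continuous maps $\Xt \to \Gamma$; since $\Gamma$ is discrete and $\Xt$ is connected, every such map is constant, so $\Gamma(\Xt, \hzeta) \simeq \Gamma$ (this is the torsor counterpart of Remark \ref{grpBundlesWithDiscreteFibres}). Thus $\psi$ becomes a map $\rho : \piX \to \Gamma$, and I would then verify directly that the equivariance identity $\sigma_{f_1 \circ f_2} = \sigma_{f_1} \circ \sigma_{f_2}$, combined with $\phi_{f_1} \circ \phi_{f_2} = \phi_{f_1 \circ f_2}$, forces exactly the crossed-morphism relation \eqref{crossed_morphism_expl} and that the induced structure is $\sigma_f(\xi, \gamma) = (f(\xi), \rho(f)\,\phi_f(\gamma))$ of \eqref{action_via_crossed_morphism}. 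For the second part I would specialise the conjugacy statement of Theorem \ref{ClassifTorsorTrivByCover}: an automorphism of the trivial $\hzeta$-torsor is determined by the image of the neutral section, and by the same discreteness argument this image is a constant $\gamma \in \Gamma$; substituting constants into \eqref{action_of_sections_on_crossed_morphisms} yields $(\gamma \bullet \rho)(f) = \gamma\,\rho(f)\,\phi_f(\gamma^{-1})$, which is \eqref{Gamma_action_on_crossed_morphisms}, and the bijection with $H^1_\phi(\piX, \Gamma)$ of \eqref{isomClassesOfpi_Gamma_ppal_coverings} follows.

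I expect the main obstacle to be the triviality of $\heta$: one must argue cleanly that a torsor under a group covering is itself a covering space and invoke simple-connectedness of $\Xt$ to trivialise it. This is precisely what replaces the standing hypothesis of Corollary \ref{pi_G_ppal_bundles} and what makes it possible to classify \emph{all} $\zeta$-torsors rather than only those that are trivial after pullback. A secondary technical care is bookkeeping the conventions so that the general action formulas specialise to the signs appearing in \eqref{crossed_morphism_expl} and \eqref{Gamma_action_on_crossed_morphisms}; I would resolve this by the direct verification above rather than by blind substitution into the generic formulas.
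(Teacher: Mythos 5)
Your proposal is correct and follows essentially the same route as the paper: pull back to $\Xt$, observe that the $\hzeta$-torsor $\heta$ is a ($\Gamma$-principal) covering of the simply connected space $\Xt$ and hence trivial, then use discreteness of $\Gamma$ and connectedness of $\Xt$ to collapse $\Gamma(\Xt,\hzeta)\simeq\Maps(\Xt,\Gamma)$ to $\Gamma$, recovering the crossed morphism $\rho$ and the conjugation action \eqref{Gamma_action_on_crossed_morphisms}. The only cosmetic difference is that you route the first part through Theorem \ref{ClassifTorsorTrivByCover} before specialising, whereas the paper computes $\sigma_f(\xi,\gamma)=\sigma_f(\xi,1_\Gamma)\,\tau_f(\xi,\gamma)$ directly; the content is identical.
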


\begin{proof}
    Denote by $\heta$ the pullback of the $\zeta$-torsor $\eta$ to the universal covering $\Xt$ of $X$. Then $\heta$ is a $\pi$-equivariant $\hzeta$-torsor on $\Xt$, where $\hzeta$ is the pullback of $\zeta$ to $\Xt$. Since $\widehat{\zeta} \simeq \Gamma_{\Xt} := \Xt \times \Gamma$ as a group bundle, with $\pi$-equivariant structure given, for all $f \in \pi$, all $\xi \in \Xt$ and all $\gamma \in \Gamma$, by 
    $$
    \tau_f\big(\xi, \gamma\big) = \big( f(\xi), \phi_f(\gamma) ),
    $$
    the $\hzeta$-torsor $\heta$ is actually a principal $\Gamma$-covering on $\Xt$. And since $\Xt$ is simply connected, we have an isomorphism of $\Gamma$-principal coverings $\heta \simeq \Xt \times \Gamma$. Therefore, to prove the first part of the theorem, it suffices to show that the $\pi$-equivariant structure of $\Xt \times \Gamma$ induced by the $\pi$-equivariant structure of $\heta$ is of the form \eqref{action_via_crossed_morphism} for some crossed morphism $\rho : \piX \to \Gamma$. As in the proof of Corollary \ref{pi_G_ppal_bundles}, the compatibility relation $\sigma_f(\xi, \gamma) = \sigma_f(\xi, 1_\Gamma)\,\tau_f(\xi,\gamma)$, between the $\pi$-equivariant structures of $\Xt \times \Gamma$ and $\Gamma_{\Xt}$, implies that $\sigma_f$ is entirely determined by $\sigma_f(\xi,1_\Gamma) \in \Gamma$. As $\Gamma$ is discrete, the function $\xi \mapsto \sigma_f(\xi, 1_\Gamma)$ is constant equal to $\rho(f)\in \Gamma$, say, and this defines a function $\rho : \piX \to \Gamma$ such that, for all $f\in \piX$, all $\xi \in  \Xt$ and all $\gamma \in \Gamma$,
    $$
    \sigma_f \big( \xi, \gamma \big) = \sigma_f \big( \xi, 1_\Gamma \big) \, \tau_f\big(\xi, \gamma \big) = \big( f(\xi), \rho(f) \big) \, \big( f(\xi), \phi_f(\gamma) \big) = \big( f(\xi), \rho(f)\phi_f(\gamma) \big),
    $$
    which proves \eqref{action_via_crossed_morphism}. There remains to check that the map $\rho : \piX \to \Gamma$ thus defined is actually a crossed morphism. But a simple calculation shows that Relation \eqref{crossed_morphism_expl} follows from the definition of $\rho(f)$ as $\sigma_f(\xi, 1_\Gamma)$ and the fact that $\sigma_{f_1 \circ f_2} = \sigma_{f_1} \circ \sigma_{f_2}$, which finishes the proof of the first part of the theorem, since $\eta \simeq \piX \backslash \heta$ by definition of $\heta$. The proof of the second part is similar to what we did in Theorem \ref{ClassifTorsorTrivByCover} and Corollary \ref{pi_G_ppal_bundles}.
\end{proof}

Note that, given a normal subgroup $\pi \lhd \piX$ and a $(\pi,\Gamma)$-group covering $\sG$ on $X$, we can classify similarly those $\sG$-torsors on $X$ \emph{that become trivial as $\Gamma$-principal bundles after pullback to the Galois cover $Y$ of $X$ corresponding to $\pi$}. Namely, given a discrete group $\Gamma$ and a group morphism $\phi : \pi \to \Aut(\Gamma)$, isomorphism classes of torsors under the group covering $\pi \backslash \Gamma_Y$ that they become trivial as $\Gamma$-principal bundles after pullback to $Y$, are in bijection with the elements of the group cohomology set $H^1_\phi(\pi,\Gamma)$.

\subsection{Flat group bundles and flat torsors}\label{twisted_local_systems}

A special case of Theorem \ref{pi_Gamma_ppal_coverings} that is of particular interest to us is when the typical fibre of the group covering $\zeta$ is a discrete group of the form $\Gamma = G^\#$, meaning that $\Gamma$  is the discrete form of a Lie group $G$ (i.e.\ the abstract group $G$ endowed with the discrete topology, as defined in \cite{Atiyah_complex_connections}). In that case, we can view continuous transition functions for the group \emph{covering} $\zeta \simeq \piX \backslash G^\#_{\Xt}$ as \emph{locally constant} transition functions for the group bundle $\sG := \piX \backslash G_{\Xt}$. This motivates the following definition, which we formulate in the complex analytic setting, for this is the case of most interest to us.

\begin{definition}
    Let $X$ be a complex analytic manifold and let $\sG$ be a group bundle on $X$ in the sense of Definition \ref{defGrpBdle}. Assume that $X$ is connected and denote by $G$ the typical fibre of $\sG$. A \emph{flat} group bundle on $X$ is a pair $(\sG, \nabla_\sG)$ consisting of a group bundle $\sG$ and an equivalence class $\nabla_\sG$ of \emph{locally constant} functions
    $$
    \kappa_{ij} : U_i \cap U_j \to \Aut(G)
    $$
    defining a cocyle that represents $\sG$, where two such cocycles are called equivalent if they are related by $\Aut(G)$-valued locally constant functions on the open sets $U_i$ and $U_j$. Such an equivalence class $\nabla_{\sG}$ is called a \emph{flat structure} on $\sG$.
\end{definition}

If $(\sG, \nabla_\sG)$ is a flat group bundle, we can associate to it a \emph{discrete form} $\sG^\#$ of $\sG$, which by definition is the group covering obtained by gluing the various $U_i \times G^\#$ using the locally constant functions $\phi_{ij}$. The group space $\sG^\#$ is then a group bundle with discrete fibre $G^\#$. By Theorem \ref{pi_Gamma_ppal_coverings}, it is therefore a global quotient of the form $\piX \backslash G^\#_{\Xt}$, where the $\piX$-equivariant structure of $G^\#_{\Xt}$ is defined by a group morphism $\phi : \piX \to \Aut(G^\#)$. When this group morphism $\phi$ actually takes value in the group $\Aut(G) \subset \Aut(G^\#)$ consisting of \emph{complex analytic} automorphisms of $G$, then the group bundle $\sG$ is isomorphic, as a \emph{holomorphic} group bundle, to the global quotient $\piX \backslash G_{\Xt}$, where $\piX$ acts on the product group bundle $G_{\Xt}$ via $\phi$. Note that choosing such a presentation of $\sG$ as a quotient is equivalent to the choice of a flat structure on $\sG$, and that the map that forgets the flat structure on a flat group bundle is not injective in general, as it may very well happen that two locally constant $\Aut(G)$-valued cocycles are not equivalent modulo locally constant $\Aut(G)$-valued functions but are equivalent under analytic $\Aut(G)$-valued functions. If we restrict to complex analytic group bundles $\sG$ that become analytically trivial after pullback to the universal cover (which again is all of them if $\Xt$ is a contractible Stein manifold), this is expressed by the fact that the map
$$
H^1_{\phi}\big(\piX, \Aut(G)\big) \to H^1_{\phi} \big( \piX, \Maps(\Xt, \Aut(G)) \big)
$$
is not injective in general. 

\smallskip 

Let us now study torsors under flat group bundles. To that end, recall that flat fibre bundles are defined as (locally trivial) fibre bundles equipped with a flat structure (i.e.\ an equivalence class of locally constant transition functions) and that an isomorphism of flat bundles with fibre $F$, say, is an isomorphism of fibre bundles which can be represented by locally constant functions $u_i : U_i \to \Aut(F)$, a notion that makes sense if we have a fixed equivalence class of locally constant transition functions over double intersections $U_i \cap U_j$. It is convenient to call such an isomorphism a \emph{flat} isomorphism.

\begin{definition}
    Let $X$ be a complex analytic manifold and let $(\sG,\nabla_\sG)$ be a flat group bundle on $X$. A \emph{flat torsor} under $(\sG, \nabla_\sG)$ is a pair $(\sP,\nabla_\sP)$ consisting of a $\sG$-torsor $\sP$ in the sense of Definition \ref{torsor_def} and a flat structure $\nabla_\sP$ on the fibre bundle $\sP$ such that, for all $x \in X$, there is an open neighbourhood $U$ of $x$ and a bundle isomorphism $\sG|_U \simeq \sP|_U$ that is both $\sG|_U$-equivariant and flat.
\end{definition}

The point is that we can classify flat $(\sG,\nabla_\sG)$-torsors. The general way to do so is via an analogue of Theorem \ref{isomorphismH1andIsoClassesOfTorsors}. Recall from the latter that, given a complex analytic manifold $X$ and a group space $\sG$ over $X$, the set of isomorphism classes of $\sG$-torsors is in bijection with the \v{C}ech cohomology group $\check{H}^1(X,\sO_X(\sG))$, where $\sO_X(\sG)$ is the sheaf of holomorphic sections of $\sG$. Now, when a flat structure $\nabla_\sG$ has been fixed on $\sG$, we have a well-defined subsheaf of $\sO_X(\sG)$, consisting of \emph{locally constant sections of $\sG$(with respect to the flat structure $\nabla_\sG$}. We can denote that subsheaf by $\sO_X(\sG^\#)$, since it consists equivalently of continuous (or holomorphic) sections of the analytic group covering $\sG^\#$ (the discrete form of $(\sG,\nabla_\sG)$). We then obtain the following analogue of Theorem \ref{isomorphismH1andIsoClassesOfTorsors}, which is proved along the same lines as that result.

\begin{theorem}\label{isomorphismH1andIsoClassesOfFlatTorsors}
    Let $X$ be a complex analytic manifold and let $(\sG,\nabla_\sG)$ be a flat group bundle on $X$. Then the set of isomorphism classes of flat $(\sG,\nabla_\sG)$-torsors is in bijection with the \v{C}ech cohomology group $\check{H}^1(X,\sO_X(\sG^\#))$, where $\sO_X(\sG^\#)$ is the sheaf of locally constant functions of $\sG$ with respect to the flat structure $\nabla_\sG$.
\end{theorem}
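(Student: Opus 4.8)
The plan is to follow the proof of Theorem~\ref{isomorphismH1andIsoClassesOfTorsors} essentially verbatim, replacing the sheaf $\sO_X(\sG)$ of holomorphic sections by its subsheaf $\sO_X(\sG^\#)$ of locally constant sections at every step, and replacing ``isomorphism of $\sG$-torsors'' by ``flat isomorphism of flat torsors'' (that is, a $\sG$-torsor isomorphism which is also a flat isomorphism of the underlying fibre bundles). First I would fix a flat $(\sG,\nabla_\sG)$-torsor $(\sP,\nabla_\sP)$ and choose, on a common refinement of the cover defining $\nabla_\sG$, trivialisations $\phi_i : \sP|_{U_i} \lra \sG|_{U_i}$ that are simultaneously $\sG|_{U_i}$-equivariant and flat; these exist by the very definition of a flat torsor. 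Each $\phi_i \circ \phi_j^{-1}$ is then a flat automorphism of the trivial $\sG|_{U_{ij}}$-torsor, and the bijection~\eqref{autom_of_torsors} attaches to it a section $g_{ij}$ of $\sG|_{U_{ij}}$.

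The key new point---and the step I expect to be the crux---is a flat refinement of the bijection~\eqref{autom_of_torsors}: a flat $\sG|_U$-equivariant automorphism of $\sG|_U$ corresponds exactly to a \emph{locally constant} section, i.e.\ to a section of $\sG^\#$ over $U$. Indeed, in a flat equivariant trivialisation $\sG|_U \simeq U \times G$, such an automorphism is left translation $(x,g) \lmt (x, s(x)g)$ by the map $s(x) := u(\eps(x))$, and this is a flat bundle automorphism precisely when $s$ is locally constant, since flatness amounts to sending locally constant local sections to locally constant local sections. Granting this, $g_{ij} \in \sO_X(\sG^\#)(U_{ij})$, while the cocycle relations $g_{ii} = \eps|_{U_i}$ and $g_{ij} g_{jk} = g_{ik}$ hold for exactly the reason recorded before Theorem~\ref{isomorphismH1andIsoClassesOfTorsors}. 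Thus $(\sP,\nabla_\sP)$ determines a class in $\check{H}^1(X,\sO_X(\sG^\#))$.

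That this class depends only on the flat isomorphism class is checked as in Proposition~\ref{isomorphicTorsorsAreCohomologous}: a flat isomorphism $(\sP,\nabla_\sP) \to (\sQ,\nabla_\sQ)$ produces sections $u_i$ with $h_{ij} = u_i g_{ij} u_j^{-1}$, and its flatness forces each $u_i$ to be locally constant by the refined bijection, so the two cocycles are cohomologous in $\sO_X(\sG^\#)$. For the inverse construction, given a locally constant cocycle $(g_{ij})$ I would glue the trivial torsors $\sG|_{U_i}$ by left multiplication by $g_{ij}$, exactly as in the construction preceding Theorem~\ref{isomorphismH1andIsoClassesOfTorsors}; because the $g_{ij}$ are locally constant, the resulting $\sG$-torsor $\sP$ inherits a canonical flat structure $\nabla_\sP$ for which the tautological trivialisations are flat and equivariant, so that $(\sP,\nabla_\sP)$ is a flat torsor realising the given class.

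Finally I would observe that these two assignments are mutually inverse, which is immediate once the correspondence between flat equivariant data and locally constant sections has been established. The only genuinely new ingredient relative to Theorem~\ref{isomorphismH1andIsoClassesOfTorsors} is that flat refinement of~\eqref{autom_of_torsors}; all the remaining bookkeeping (cocycle identities, independence of the trivialisation, surjectivity of the gluing construction) is a transcription of the earlier argument with $\sO_X(\sG)$ replaced throughout by $\sO_X(\sG^\#)$.
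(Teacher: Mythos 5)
Your proposal is correct and follows exactly the route the paper intends: the paper itself only remarks that Theorem \ref{isomorphismH1andIsoClassesOfFlatTorsors} ``is proved along the same lines'' as Theorem \ref{isomorphismH1andIsoClassesOfTorsors}, and your argument is precisely that transcription, with the flat refinement of the bijection \eqref{autom_of_torsors} (flat equivariant automorphisms of the trivial torsor correspond to locally constant sections) correctly identified as the one genuinely new ingredient.
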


Let us now write the flat bundle $(\sG,\nabla_\sG)$ on $X$ as a quotient $\sG_\phi := \piX \backslash G_{\Xt}$, where $\piX$ acts on the group bundle $G_{\Xt} := \Xt \times G$ via the $\piX$-equivariant structure $\sigma_f(\xi,g) := (f(\xi), \phi_f(g))$ induced by a group morphism $\phi : \piX \to \Aut(G)$. We know from Corollary \ref{pi_G_ppal_bundles} that, if we forget the flat structure of $\sG$, then isomorphism classes of $\sG$-torsors that become trivial after pullback to the universal cover $\Xt$ (which is all of them if $\Xt$ is a contractible Stein manifold) are in bijection with the group cohomology set $H^1_\phi(\piX, \Maps(\Xt, G))$ defined in \eqref{isoClassesOfpi_G_ppal_bdles}. And if we focus on \emph{flat} torsors, then, as in Theorem \ref{pi_Gamma_ppal_coverings}, we can classify isomorphism classes of flat $(\sG,\nabla_\sG)$-torsors using the cohomology set $H^1_\phi(\piX, G)$, defined as in \eqref{isomClassesOfpi_Gamma_ppal_coverings}. For convenience, we formulate this explicitly below.

\begin{definition}\label{def_twisted_local_system}
    Let $X$ be a complex analytic manifold and let $G$ be a complex analytic Lie group. Given a group morphism $\phi : \piX \to \Aut(G)$, we denote by $\sG_\phi := \piX \backslash G_X$ the associated flat group bundle. Then a flat $\sG_\phi$-torsor is called a $\phi$-twisted $G$-local system on $X$. An isomorphism of $\phi$-twisted $G$-local systems on $X$ is, by definition, an isomorphism of flat $\sG_\phi$-torsors.
\end{definition}

Note that the group bundle $\sG_\phi$ comes equipped with a canonical flat structure, so we omit the symbol $\nabla_{\sG_\phi}$ from the notation.

\begin{theorem}\label{classif_twisted_local_systems}
    Let $X$ be a complex analytic manifold and let $G$ be a complex analytic Lie group. Given a group morphism $\phi : \piX \to \Aut(G)$, the map that sends a crossed morphism $\rho : \piX \to G$ to the $G$-twisted local system 
    $$
    \sP(\rho) := \piX \backslash (\Xt \times G)
    $$
    where $\piX$ acts on the principal $G$-bundle $X\times G$ via the $\piX$-equivariant structure 
    $$
    \tau_f\big(\xi, h\big) := \big( f(\xi), \rho(f)\phi_f(h)\big)
    $$
    induces a bijection 
    $$
    H^1_\phi(\piX, G) \simeq \big\{\, \phi\text{-twisted }G\text{-local systems on }X\,\big\}\ \big/\ \text{isomorphism}\, .
    $$
\end{theorem}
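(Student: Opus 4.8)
The plan is to deduce Theorem \ref{classif_twisted_local_systems} as a direct specialisation of Theorem \ref{pi_Gamma_ppal_coverings}, transplanting the proof from the discrete setting to the present analytic setting, where the only substantive change is that the group $G$ is no longer discrete. First I would recall that, by definition, the flat group bundle $\sG_\phi = \piX \backslash G_{\Xt}$ carries a canonical flat structure, so that a flat $\sG_\phi$-torsor is a $\sG_\phi$-torsor $\sP$ together with a flat structure $\nabla_\sP$ compatible with the $\sG_\phi$-action. Pulling back to the universal cover $q : \Xt \to X$, the flat torsor $\sP$ becomes a $\piX$-equivariant $\hat{\sG}_\phi$-torsor $\heta$ on $\Xt$, where $\hat{\sG}_\phi \simeq \Xt \times G$ with the $\piX$-equivariant structure $\tau_f(\xi, g) = (f(\xi), \phi_f(g))$. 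The key point, analogous to the discreteness argument in Theorem \ref{pi_Gamma_ppal_coverings}, is that the flat structure $\nabla_\sP$ forces the relevant transition data to be locally constant, so that $\heta$ is not merely a torsor but a \emph{flat} principal $G$-bundle on the simply connected $\Xt$; hence $\heta$ is flatly trivial, giving a $\piX$-equivariant flat isomorphism $\heta \simeq \Xt \times G$.

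Once the flat trivialisation is in hand, I would classify the induced $\piX$-equivariant structures $(\sigma_f)_{f \in \piX}$ on the trivial flat torsor $\Xt \times G$. As in the proof of Theorem \ref{pi_Gamma_ppal_coverings}, the compatibility relation
$$
\sigma_f(\xi, g) = \sigma_f(\xi, 1_G)\, \tau_f(\xi, g)
$$
shows that $\sigma_f$ is entirely determined by the value $\sigma_f(\xi, 1_G) \in G$. Here the flatness replaces discreteness: because $\nabla_\sP$ makes $\xi \mapsto \sigma_f(\xi, 1_G)$ a locally constant function on the connected space $\Xt$, it is constant, equal to some $\rho(f) \in G$. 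This yields a well-defined function $\rho : \piX \to G$ with
$$
\sigma_f(\xi, g) = \big(f(\xi), \rho(f)\,\phi_f(g)\big),
$$
which is exactly the $\piX$-equivariant structure $\tau_f$ displayed in the statement, so that $\sP \simeq \sP(\rho)$. That $\rho$ is a crossed morphism in the sense of \eqref{crossed_morphism_expl} follows from $\sigma_{f_1 \circ f_2} = \sigma_{f_1} \circ \sigma_{f_2}$ by the same short calculation as in Theorem \ref{pi_Gamma_ppal_coverings}, evaluating both sides at $(\xi, 1_G)$ and using that $\phi$ is a group morphism.

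For the injectivity and surjectivity of the induced map $H^1_\phi(\piX, G) \to \{\phi\text{-twisted }G\text{-local systems}\}/\!\simeq$, I would argue exactly as in the second part of Theorem \ref{pi_Gamma_ppal_coverings} and Corollary \ref{pi_G_ppal_bundles}. Surjectivity is the first part just established; for well-definedness and injectivity, given crossed morphisms $\rho, \rho'$ inducing flatly isomorphic torsors $\sP(\rho) \simeq \sP(\rho')$, the isomorphism pulls back to a $\piX$-equivariant flat automorphism $\alpha$ of $\Xt \times G$ conjugating $\sigma$ to $\sigma'$; flatness and connectedness of $\Xt$ again force $\alpha(\xi, g) = (\xi, \gamma\, g)$ for a \emph{constant} $\gamma \in G$, and reading off the value at $1_G$ produces precisely the relation $\rho'(f) = \gamma\, \rho(f)\, \phi_f(\gamma^{-1})$, i.e.\ $\rho' = \gamma \bullet \rho$ in the sense of \eqref{Gamma_action_on_crossed_morphisms}. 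Thus two crossed morphisms give isomorphic twisted local systems if and only if they lie in the same $G$-orbit, which is the desired bijection with $H^1_\phi(\piX, G)$.

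I expect the main obstacle to be the careful bookkeeping around the flat structure, specifically justifying that the compatibility of $\nabla_\sP$ with the action genuinely reduces the trivialising and conjugating data to \emph{constant} (rather than merely holomorphic) maps $\Xt \to G$. This is the exact point where the flat case departs from Corollary \ref{pi_G_ppal_bundles}, and where the locally constant transition functions play the role that discreteness of $\Gamma$ played in Theorem \ref{pi_Gamma_ppal_coverings}: one must check that a flat isomorphism of flat principal bundles over the simply connected $\Xt$ is given by a \emph{horizontal}, hence locally constant, hence globally constant, gauge transformation. Everything else is a transcription of the discrete argument with $\Gamma$ replaced by $G$ and ``discrete fibre'' replaced by ``flat structure.''
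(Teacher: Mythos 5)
Your proposal is correct and follows essentially the same route as the paper, which obtains this theorem by applying the discrete-fibre classification of Theorem \ref{pi_Gamma_ppal_coverings} to the discrete form $G^\#$ of $G$ (so that continuity into $G^\#$ is exactly local constancy into $G$, which is your ``flatness replaces discreteness'' step). Your explicit check that the trivialising and conjugating gauge data are horizontal, hence locally constant, hence constant on the simply connected $\Xt$ is precisely the content that the paper packages into the passage from $(\sG,\nabla_\sG)$ to $\sG^\#$.
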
    

Since an element $\rho(f) \in G$ can be seen as a constant map from $\Xt$ to $G$, we have an induced map
$$
H^1_\phi\big( \piX, G\big) \to H^1_\phi\big( \piX, \Maps(\Xt, G)\big)
$$
given by forgetting the flat structure, which sends the isomorphism class of a flat $\sG_\phi$-torsor to its isomorphism class as a $\sG_\phi$-torsor. It is not injective in general, given that two crossed morphisms $\rho, \rho' : \piX \to G$ that are not conjugate under $G$ may very well be conjugate under $\Maps(\Xt, G)$. We conclude with an example that illustrates the notions we have encountered in these notes.

\begin{example}
    The group bundle $\sG_\phi := \pi \backslash (Y \times \GL(n,\C))$, defined as in Example \ref{GpBdleFromCov} by a degree $2$ Galois cover $q : Y \lra X $ with automorphism group $\pi := \Aut_X(Y) \simeq \Z/2\Z$ and a group morphism $\phi : \pi \to \Aut(\GL(n,\C))$, has a canonical flat structure. By construction, flat $\sG_\phi$-torsors (on $X$) correspond bijectively to $\pi$-equivariant flat principal $\GL(n,\C)$-bundles on $Y$. 
    \begin{itemize}
        \item If the group morphism $\phi$ is trivial, these also correspond, by descent or because $\sG_\phi$ is a constant group, to flat principal $\GL(n,\C)$-bundles on $X$. Equivalently, flat $\sG_\phi$-torsors correspond bijectively to flat rank $n$ vector bundles on $X$. If we denote by $\si : Y \to Y$ the involution given by the Galois action, then $\pi$-equivariant flat vector bundles on $Y$ can also be characterised as pairs $(\mathcal{V},u)$ where $\mathcal{V}$ is a local system of complex vector spaces of rank $n$ on $Y$, and $u : \mathcal{V} \to \sigma^*\mathcal{V}$ is an isomorphism of local systems satisfying the descent condition $\sigma^* u = u^{-1}$ (such pairs $(\mathcal{V},u)$ are called \textit{invariant} local systems on $Y$ and, due to the presence of non-trivial automorphisms for $\mathcal{V}$, the descent condition cannot be omitted from this description).
        \item If $\phi$ is given by the exterior automorphism $g \mapsto \,^t g^{-1}$, then the flat group bundle $\sG_\phi$ is non-constant and flat $\sG_\phi$-torsors can no longer be seen as flat principal $\GL(n,\C)$-bundles on $X$. However, they can be seen as $\pi$-equivariant flat principal $\GL(n,\C)$-bundles on $Y$. Equivalently, these can be characterised as pairs $(\mathcal{V},u)$ where $\mathcal{V}$ is a local system of rank $n$ complex vector spaces on $Y$, and $u : \mathcal{V} \to (\sigma^*\mathcal{V})^\vee$ is an isomorphism of local systems satisfying the descent condition $\,^t(\sigma^* u) = u^{-1}$ (such pairs $(\mathcal{V},u)$ are called \textit{anti-invariant} local systems on $Y$ and, due to the presence of non-trivial automorphisms for $\mathcal{V}$, the descent condition cannot be omitted from this description).
    \end{itemize} 
    In terms of representations of fundamental groups, flat $\sG_\phi$-torsors (which are also called $\phi$-twisted $\GL(n,\C)$-local systems on $X$) correspond to crossed morphisms $\rho: \pi_1 X \to \GL(n,\C)$ for the action of $\pi_1 X$ on $\GL(n,\C)$ induced by the group morphism $\piX \to \piX/\pi_1 Y \simeq \Aut_X(Y)$ and the action of $\Aut_X(Y)$ on $G$ given by $\phi$. Equivalently, such crossed morphisms can be seen as group morphisms $$\widehat{\rho} : \piX \to \GL(n,\C) \rtimes \Z/2\Z$$ such that $\mathrm{pr}_2 \circ \widehat{\rho} = \phi$ (the correspondence being crossed morphisms $\rho$ on the one hand and group morphisms satisfying this condition on the other hand, is given by $\widehat{\rho} := (\rho, \phi)$).
    \begin{itemize}
    \item If $\phi$ is trivial, the semi-direct product above is as direct product, and we get the usual representation space $\mathrm{Hom}(\piX,\GL(n,\C)) / \GL(n,\C)$.
    \item If $\phi$ is not trivial, however, we get a twisted representation space 
    $$\mathrm{Hom}_\phi(\piX, \GL(n,\C) \rtimes \Z/2\Z) / \GL(n,\C)$$ where $\mathrm{Hom}_\phi(\piX, \GL(n,\C) \rtimes \Z/2\Z)$ is the set of group morphisms $\widehat{\rho}$ making the following diagram commute
    \begin{center}
        \tikzcdset{arrow style=tikz, diagrams={>=angle 90}}
        \begin{tikzcd}
            \piX \arrow[to=1-3,"\widehat{\rho}"] \arrow[to=2-2,"\phi",']& & \GL(n,\C) \rtimes \Z/2\Z \arrow[to=2-2,"\mathrm{pr}_2"]\\
            &\Z/2\Z&  
        \end{tikzcd}
    \end{center} 
    \noindent and where $\GL(n,\C)$ acts on such a group morphism $\widehat{\rho}$ via conjugation in the larger group $\GL(n,\C) \rtimes \Z/2\Z)$. These twisted representation spaces consist, by definition, of holonomy representations of $\phi$-twisted $\GL(n,\C)$-local systems on $X$.
    \end{itemize}
\end{example}


\end{document}